\numberwithin{equation}{section}
\def\cb{{\mathcal B}}
\def\ce{{\mathcal E}}
\def\cf{{\mathcal F}}
\def\ch{{\mathcal H}}
\def\ck{{\mathcal K}}
\def\cam{{\mathcal M}}
\def\cs{{\mathcal S}}
\def\ct{{\mathcal T}}
\def\ga{{\mathfrak A}} 
\def\gb{{\mathfrak B}}
\def\gg{{\mathfrak G}}
\def\gi{{\mathfrak I}}
\def\gam{{\mathfrak M}}
\def\gar{{\mathfrak R}}
\def\gz{{\mathfrak Z}}
\def\bc{{\mathbb C}}
\def\bbf{{\mathbb F}}
\def\bn{{\mathbb N}}
\def\bp{{\mathbb P}}
\def\bz{{\mathbb Z}}
\def\a{\alpha}
\def\b{\beta}
\def\g{\gamma}  \def\G{\Gamma}
\def\d{\delta}
\def\l{\lambda} \def\L{\Lambda}
\def\m{\mu}
\def\r{\rho}
\def\s{\sigma} 
\def\f{\varphi}  \def\F{\Phi}
\def\om{\omega} \def\Om{\Omega}
\newtheorem{thm}{Theorem}[section]
\newtheorem{cor}[thm]{Corollary}
\newtheorem{prop}[thm]{Proposition}
\theoremstyle{definition}
\newtheorem{rem}[thm]{Remark}
\newtheorem{defin}[thm]{Definition}
\def\tr{\mathop{\rm Tr}}
\def\aut{\mathop{\rm Aut}}
\def\ad{\mathop{\rm ad}}
\def\spn{\mathop{\rm span}}
\def\ab{\mathop{\rm {\bf ab}}}
\newcommand{\ty}[1]{\mathop{\rm {#1}}}
\def\ad{\mathop{\rm ad}}
\def\idd{{1}\!\!{\rm I}}
\begin{document}

\baselineskip=17pt

\title[symmetries and ergodic properties]
{symmetries and ergodic properties in quantum probability}
\author{Vitonofrio Crismale}
\address{Vitonofrio Crismale\\
Dipartimento di Matematica\\
Universit\`{a} degli studi di Bari\\
Via E. Orabona, 4, 70125 Bari, Italy}
\email{\texttt{vitonofrio.crismale@uniba.it}}
\author{Francesco Fidaleo}
\address{Francesco Fidaleo\\
Dipartimento di Matematica\\
Universit\`{a} degli studi di Roma Tor Vergata\\
Via della Ricerca Scientifica 1, Roma 00133, Italy} \email{{\tt
fidaleo@mat.uniroma2.it}}


\begin{abstract}

\vskip0.1cm\noindent
We deal with the general structure of (noncommutative) stochastic processes by using the standard techniques of Operator Algebras.
Any stochastic process is associated to a state on
a universal object, i.e. the free product $C^*$-algebra in a natural way. In this setting one recovers the classical (i.e. commutative) probability scheme and many others, like those associated to the Monotone, Boolean and the $q$-deformed canonical commutation relations including the Bose/Fermi and Boltzmann cases. Natural symmetries like stationarity and exchangeability, as well as the ergodic properties of the stochastic processes are reviewed in detail for many interesting cases arising from Quantum Physics and Probability.
\end{abstract}

\subjclass[2010]{Primary 60G10, 46L55, 37A30. Secondary 46L30, 46N50.}

\keywords{noncommutative probability; stationary and exchangeable processes; ergodic theorems; $C^*$-algebras and states.}
\maketitle

\section{introduction}
\label{sec1}

The concept of {\it Quantum Probability} has been introduced in the middle of seventies in the pioneering works of L. Accardi \cite{Acc, AFL}, R. L. Hudson \cite{CuHu,HP}, K. R. Parthasarathy \cite{HP}, and many other scientists. Since then, natural applications to various fields on mathematics and physics were carried out.
We mention the seminal investigation by D. V. Voiculescu involving the {\it Free Probability} and its applications to non hyperfinite type $\ty{II_{1}}$ von Neumann Factors \cite{Voi}, as well as the intersections with Harmonic Analysis firstly made by Bo\.{z}ejko \cite{Boz}. We also point out the remarkable connections, recently investigated in \cite{BCS, KS}, between quantum groups introduced by Woronowicz \cite{Wor} and Quantum Probability.

The present paper mainly deals with the investigation of the general structure of stochastic processes and their natural symmetries, like stationarity and exchangeability, by using the standard techniques of Operator Algebras, see \cite{CrF, CrFid,CFL}.
Although some of the pivotal results have been obtained in the above mentioned papers, our aim is to describe new ones and present all matter in a unified approach.
Thus the notes appear as an expository-research paper, since retracing the path to pursue our review purpose, it has given the chance to highlight some new properties for the first time here. We firstly show it is possible to view in an unified way the stochastic processes with sample space the unital, not necessarily commutative, $C^*$-algebra
$\ga$ and index set $J$. Indeed we show that the collection of these stochastic processes is in one-to-one correspondence with the state space of the free product $C^*$-algebra $*_J\ga$.
In particular, the subclasses of the exchangeable or stationary stochastic processes correspond to the convex compact (provided the processes are identity-preserving) subsets of the symmetric (i.e. invariant under finite permutations) or shift-invariant states, respectively. The {\it Algebraic Probability Space} $(\gb,\f)$ (see e.g. \cite{KS}) associated to the stochastic process described by the state $\om$ is then recovered by the Gelfand-Naimark-Segal (GNS for short) representation $(\ch_\om,\pi_\om,\Om_\om)$, as
$$
(\gb,\f):=\big(\overline{\pi_\om(*_J\ga)},\langle\pi_\om(\,{\bf\cdot}\,)\Om_\om\,\Om_\om\rangle\big)\,,
$$
where the closure is meant in norm or in the weak operator topology for the $C^*$ and $W^*$ cases, respectively. It provides the generalisation to the quantum case of the Kolmogorov Extension-Reconstruction Theorem.
It results that Quantum Probability, being considered a universal scheme, appears endowed with a deep degree of complexity in itself.

In the opinion of the authors, the main advantages of this unified description are the following.

First, it is possible to study all known cases directly on a suitable quotient algebra of a single one. Any quotient is obtained factoring out the free product $C^*$-algebra by the ideal generated by a "concrete" commutator. This allows to cover, e.g. the cases of  $q$-Canonical Commutation Relations, which includes the Bose/Fermi and the Boltzmann (i.e. free), or equally well the Boolean and the Monotone ones. The list is far from being complete, since one can add the commutative scheme which arises from the abelianisation of the free product.

Second, the natural symmetries of the stochastic processes like stationarity and exchangeability, as well as their ergodic properties can be managed using the standard results of Ergodic Theory, see e.g. Chapter 4 of \cite{BR1}.

As an example, we mention the equivalence between some factorisation rules naturally emerging in Quantum Statistical Mechanics and the property of the convergence to equilibrium for stationary and symmetric states (cf. Theorem \ref{pscm}). The features above are the content of Section \ref{sec2}.

The following sections of the paper are instead devoted to review the applications of the general results of Section \ref{sec2} to the ergodic properties of the stochastic processes arising from the genuine quantum cases. The main results are complemented by the sketch of their proofs for the convenience of the reader.

In Section \ref{sec3}, we first connect some ergodic/clustering properties of a given stochastic process with some algebraic properties of its corresponding state, that is the {\it product state} or the {\it block singleton} conditions (see Definition \ref{bspsc}). This leads to clarify here the role played by the block singleton condition in Quantum Probability  as the right noncommutative analogue of the product state condition. Indeed, under the invariance conditions of stationarity or exchangeability, it is shown that the states realising the block singleton condition are exactly those satisfying the noncommutative analogue of the convergence to the equilibrium \eqref{eq11}. Moreover the product state condition results equivalent to the ergodic property of weak clustering \eqref{eq1}. Such results find a natural application to the so called Haagerup states \cite{Ha} on the group $C^*$-algebra of the free group on infinitely many generators, which appear in Free Probability.

Section \ref{sec4} is devoted to examples. We review in a self-containing form de Finetti-type results and ergodic properties for stationary and symmetric states in some concrete $C^*$-algebras, plenty of them coming from physical investigations. The cases of $q$-deformed, $-1<q<1$ \cite{BSK, CrFid}, Bose \cite{CFL, St2}, Fermi \cite{CrF, CrFid}, Boolean \cite{Boz, CrFid, CFL} and Monotone \cite{CFL, Lu} processes are described in detail.

\section{exchangeable and stationary stochastic processes}
\label{sec2}

Fix a $C^*$-algebra $\ga$ and an index set $J$. We suppose without further mention that $\ga$ is unital and all morphisms preserve the algebraic structure, including the $*$ operation.
For simplicity we may think of $J=\bz$ to achieve the two sided shift, but general index sets are allowed as well as non unital sample algebras $\ga$, or non identity preserving stochastic processes.

A dynamical system based on the group $G$ is a pair $(\ga,\a)$, where $\ga$ is a $C^*$-algebra and $\a:g\in G\mapsto\a_g\in\aut(\ga)$ is an action on it.
To achieve dissipative dynamics, one needs to consider merely completely positive linear maps $\a_g$. This is the case when simply a monoid naturally acts on $\ga$. Nevertheless in the present paper we consider only dynamical systems based on automorphisms.
The {\it fixed point subalgebra} is defined as $\ga^G:=\{a\in\ga\mid \a_g(a)=a\,,g\in G\}$.
We denote the $*$-weakly compact convex subset of the $G$-invariant states by $\cs_G(\ga)$, and the collection of its extremal (i.e. ergodic) states by $\ce(\cs_G(\ga))$. For $(\ga, \a)$ as above and an invariant state $\f$ on $\ga$,
$(\pi_\f,\ch_\f,U_\f,\Omega_\f)$ is the GNS covariant
quadruple canonically associated to $\f$, see e.g. \cite{BR1}.
As usual, $\gz_\f:=\pi_\f(\ga)''\bigwedge\pi_\f(\ga)'$ is the centre of $\pi_\f(\ga)''$.

In these notes we mainly deal with a couple of groups. Namely we take the group $\bp_J:=\bigcup\{\bp_I|I\subseteq J\, \text{finite}\}$, given by the permutations on $J$ leaving fixed all of its elements but a finite number of them, or we get $G=\bz$. In the latter case the action of $G$ is generated by a single automorphism (i.e. the shift) $\a\in\aut(\ga)$. A state is called is called {\it symmetric} or {\it shift-invariant} if it belongs to $\cs_{\bp_J}(\ga)$ or $\cs_{\mathbb{Z}}(\ga)$, respectively.

Consider the unital free product $C^{*}$-algebra $*_\bz\ga$ based on a single $C^{*}$-algebra $\ga$, see e.g. \cite{VDN}. For $j\in\bz$, denote
$i_j:\ga\to *_\bz\ga$ the canonical injections of $\ga$ into $*_\bz\ga$. Then both $\bp_\bz$ and $\bz$ are naturally acting on $*_\bz\ga$ by considering the permutations and the shift of the indices.
\begin{prop}
\label{symsh}
We have $\cs_{\bp_\bz}(*_\bz\ga)\subset\cs_{\bz}(*_\bz\ga)$.
\end{prop}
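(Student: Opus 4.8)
The plan is to exploit that, although the shift is not a finite permutation of $\bz$, its effect on any single element of the free product involves only finitely many coordinates, and on those coordinates it can be matched by a genuine finite permutation. Write $\th\in\aut(*_\bz\ga)$ for the automorphism generating the shift action, so that $\th(i_j(a))=i_{j+1}(a)$, and for $p\in\bp_\bz$ let $\a_p\in\aut(*_\bz\ga)$ be the automorphism implementing $p$, so that $\a_p(i_j(a))=i_{p(j)}(a)$ (both are well defined by the universal property of the free product, as recalled above). A state is symmetric iff it is fixed by every $\a_p$, and shift-invariant iff it is fixed by $\th$, invariance under the $\bz$-action being equivalent to invariance under the single generator $\th$. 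Hence it suffices to prove $\om\circ\th=\om$ for an arbitrary $\om\in\cs_{\bp_\bz}(*_\bz\ga)$.

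First I would reduce to the norm-dense algebraic free product $*_\bz^{\rm alg}\ga$ generated by $\bigcup_{j\in\bz}i_j(\ga)$, since $\om$ is norm-continuous and $\th$ is isometric. Given $x$ in this subalgebra, $x$ is a finite polynomial in elements $i_j(a)$ whose indices $j$ range over a finite subset of $\bz$, say contained in $\{-N,\dots,N\}$. I would then take $p_N\in\bp_\bz$ to be the cyclic permutation supported on $\{-N,\dots,N+1\}$ with $p_N(j)=j+1$ for $-N\le j\le N$ and $p_N(N+1)=-N$. Then $p_N$ agrees with the shift on every index occurring in $x$, so $\a_{p_N}$ and $\th$ coincide on each generator appearing in $x$; being homomorphisms, they coincide on $x$ itself, whence $\om(\th(x))=\om(\a_{p_N}(x))=\om(x)$ by symmetry of $\om$. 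Running $x$ through the dense subalgebra and passing to the limit yields $\om\circ\th=\om$ on $*_\bz\ga$, i.e.\ $\om\in\cs_\bz(*_\bz\ga)$.

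I do not expect a genuine obstacle in this argument. The only point requiring a little (elementary) care is the explicit construction of a finite permutation that coincides with the shift on a prescribed finite window of indices --- supplied by the cyclic rotation $p_N$ above --- after which the conclusion follows by the routine density-and-continuity step extending the identity $\om\circ\th=\om$ from $*_\bz^{\rm alg}\ga$ to the whole of $*_\bz\ga$.
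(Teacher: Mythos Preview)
Your proof is correct and follows essentially the same approach as the paper: reduce to the dense algebraic free product, then for each element supported on a finite window of indices exhibit a cycle in $\bp_\bz$ that agrees with the shift on that window, and conclude by symmetry. The paper phrases this on individual generating words $X=i_{j_1}(A_1)\cdots i_{j_n}(A_n)$ and invokes a cycle $\g_X$ sending the interval $[k,l]\supset\{j_1,\dots,j_n\}$ onto $[k+1,l+1]$, which is exactly your rotation $p_N$.
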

\begin{proof}
For $\{j_1,j_2\dots, j_n\}\subset\bz$ with possibly repeated indices such that the contiguous ones are different (i.e. $j_i\neq j_{i+1}\,, i=1,\dots,n-1$),
the elements $X:=i_{j_1}(A_1)i_{j_2}(A_2)\cdots i_{j_n}(A_n)$
with $A_j\in\ga$, generate $*_\bz\ga$. By a standard approximation argument, we reduce the matter to such generators.
For $X$ and the corresponding sequence of indices as above, there exists a finite interval $J_X=[k,l]\subset\bz$ with $\{j_1,j_2\dots, j_n\}\subset J_X$.
In addition, there exists a cycle
$\g_X\in\bp_\bz$ such that $[k+1,l+1]=\g_X(J_X)$. For $\f\in\cs_{\bp_\bz}(*_\bz\ga)$, after denoting by $\a$ and $\a_g$ the one step shift and the action of $\bp_\bz$, respectively, we get
$$
\f(\a(X))=\f(\a_{\g_X}(X))=\f(X)\,.
$$
\end{proof}
One of the fundamental achievements in the theory of stochastic processes (classical or not) allows to find some sufficient conditions to construct a process starting from the knowledge of a collection of finite dimensional distributions. In the abelian case they are summarised in the Kolmogorov Reconstruction Theorem, whereas the quantum generalisation is provided by the GNS construction.

Fix $n\in\bn$, $\{j_1,j_2\dots, j_n\}\subset J$ with contiguous different indices,
and elements $\{A_1,A_2\dots, A_n\}\subset \ga$. The finite joint distributions are the values $p_{j_1,j_2\dots, j_n}(A_1,A_2\dots, A_n)$ which arise from multilinear functionals
$\{p_{j_1,j_2\dots, j_n}\}_{j_1,j_2\dots, j_n\in J}$ on $\ga$. They satisfy some natural natural positivity and consistency conditions given by
\begin{align*}
(i)\,\,\,\,\,\,&p_{j_n,\dots,j_2,j_1,j_2,\dots,j_n}(A_n^*,\dots,A_1^*A_1,\dots,A_n)\geq0\,\,  (positivity)\\
(ii)\,\,\,\,\,&p_{j_1,\dots, j_{k-1},j_{k},j_{k-1},\dots, j_{n}}(A_1,\dots,A_{k-1},\idd,A_{k+1},\dots,A_n)\\
=&p_{j_1,\dots, j_{k-1},j_{k+1},\dots, j_n}(A_1,\dots,A_{k-1},A_{k+1},\dots, A_n)\,\, (consistency).
\end{align*}
In the classical case, i.e. $\ga=C(I)$, the algebra of the continuous functions on the compact space $I$, the above properties reduces to the Kolmogorov requests. Thus one can construct a probability measure $\m$ on the Tikhonoff product $\prod_J I$ of $J$ copies of $I$. In the quantum setting they allow to perform the GNS representation (defined up to unitary equivalence) and so give rise to general stochastic processes, as defined in the forthcoming lines. In order to avoid technicalities, we assume as starting point (i.e. by definition) that the process under consideration is directly realised on a Hilbert space, corresponding to $L^2\big(\prod_J I,\m\big)$ in the classical situation.
\begin{defin}
\label{upun}
A (realisation of the) {\it stochastic process} labelled by the index set $J$ is a quadruple
$\big(\ga,\ch,\{\iota_j\}_{j\in J},\Om\big)$, where $\ga$ is a $C^{*}$-algebra, $\ch$ is an Hilbert space,
the $\iota_j$'s are $*$-homomorphisms of $\ga$ in $\cb(\ch)$, and
$\Om\in\ch$ is a unit vector, cyclic for  the von Neumann algebra
$M:=\bigvee_{j\in J}\iota_j(\ga)$ naturally acting on $\ch$. The process is said to be {\it unital} if $
\iota_j(\idd_\ga)=I_\ch$, $j\in J$.

The process is said to be {\it exchangeable}, or {\it stationary} whenever $J$ is $\bz$ or $\bn$, if for any $n\in\bn$, $j_1,\ldots j_n\in J$, $A_1,\ldots A_n\in\ga$
$$
\langle\iota_{j_1}(A_1)\cdots\iota_{j_n}(A_n)\Om,\Om\rangle
=\langle\iota_{g(j_{1})}(A_1)\cdots\iota_{g(j_{n})}(A_n)\Om,\Om\rangle\,,
$$
for $g\in\bp_J$, or $g(j_{l})=j_{l}+1$, respectively.
\end{defin}
\vskip0.2cm
\noindent
It is easy to see that the quadruple $\big(\ga,\ch,\{\iota_j\}_{j\in J},\Om\big)$ uniquely realises, up to unitary equivalence, the stochastic process by which we mean that it provides all its joint finite distributions:
\begin{equation*}
p_{j_1,j_2\dots, j_n}(A_1,A_2\dots, A_n):=\langle\iota_{j_1}(A_1)\cdots\iota_{j_n}(A_n)\Om,\Om\rangle\,.
\end{equation*}
From now on, if not otherwise specified, we only deal with unital stochastic processes.

Consider the unital free product $C^{*}$-algebra $*_J\ga$.
One can see that the a stochastic process uniquely defines a state $\f\in\cs(*_J\ga)$ and viceversa.
\begin{thm}
\label{mmin}
The unitary equivalence class determined by the quadruple $\big(\ga,\ch,\{\iota_j\}_{j\in J},\Om\big)$ uniquely defines a state $\f\in\cs(*_J\ga)$, and a representation $\pi$ of $*_J\ga$ on the Hilbert space $\ch$ such that $(\pi,\ch,\Om)$ is the GNS representation of the state $\f$. Conversely, each state $\f\in\cs(*_J\ga)$ defines a stochastic process.

Such one-to-one correspondence sends
exchangeable and stationary processes (provided the set $J$ is $\bz$ for the latter) to symmetric or shift invariant states, respectively.
\end{thm}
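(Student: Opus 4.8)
The proof splits naturally into the bijection part and the symmetry part. For the bijection, I would start from the universal property of the free product $C^*$-algebra $*_J\ga$: a family of $*$-homomorphisms $\{\iota_j:\ga\to\cb(\ch)\}_{j\in J}$ into a common target is precisely the data of a single $*$-homomorphism $\Pi:*_J\ga\to\cb(\ch)$ with $\Pi\circ i_j=\iota_j$, where $i_j$ are the canonical injections. Given a process $\big(\ga,\ch,\{\iota_j\}_{j\in J},\Om\big)$, set $\f:=\langle\Pi(\,\cdot\,)\Om,\Om\rangle$; this is a state on $*_J\ga$ (positive, and unital since the process is unital, so $\Pi(\idd_{*_J\ga})=I_\ch$). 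Cyclicity of $\Om$ for $M=\bigvee_j\iota_j(\ga)=\Pi(*_J\ga)''$ is exactly cyclicity of $\Om$ for $\Pi(*_J\ga)$, so by uniqueness of the GNS representation $(\pi,\ch,\Om)$ is unitarily equivalent to $(\pi_\f,\ch_\f,\Om_\f)$; setting $\pi:=\Pi$ gives the claim. Conversely, from a state $\f\in\cs(*_J\ga)$ take its GNS triple $(\pi_\f,\ch_\f,\Om_\f)$ and define $\iota_j:=\pi_\f\circ i_j$; then $\big(\ga,\ch_\f,\{\iota_j\}_{j\in J},\Om_\f\big)$ is a stochastic process, unital because $\f$ (hence $\pi_\f$) is unital. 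One then checks the two assignments are mutually inverse up to unitary equivalence, using that the joint distributions $p_{j_1,\dots,j_n}(A_1,\dots,A_n)=\f(i_{j_1}(A_1)\cdots i_{j_n}(A_n))$ determine $\f$ on the generators, and that unitarily equivalent quadruples produce the same $\f$.

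For the symmetry statement, recall that $\bp_J$ (and $\bz$, when $J=\bz$) act on $*_J\ga$ by permuting, respectively shifting, the indices: a permutation $g$ induces the automorphism $\a_g$ determined on generators by $\a_g(i_j(A))=i_{g(j)}(A)$, and similarly for the shift. The defining exchangeability identity
$$
\langle\iota_{j_1}(A_1)\cdots\iota_{j_n}(A_n)\Om,\Om\rangle
=\langle\iota_{g(j_1)}(A_1)\cdots\iota_{g(j_n)}(A_n)\Om,\Om\rangle
$$
translates verbatim, under the correspondence, into $\f(\a_g(X))=\f(X)$ for $X$ ranging over the generators $X=i_{j_1}(A_1)\cdots i_{j_n}(A_n)$, hence for all $X\in*_J\ga$ by linearity, continuity and density. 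Thus the process is exchangeable iff $\f\in\cs_{\bp_J}(*_J\ga)$; the stationary case is identical, using the one-step shift and $g(j_l)=j_l+1$. (When $J=\bz$, Proposition \ref{symsh} then makes the inclusion of the symmetric states into the shift-invariant ones automatic, consistent with exchangeability implying stationarity.)

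The only genuinely delicate point is the approximation argument used to pass from the generators $X=i_{j_1}(A_1)\cdots i_{j_n}(A_n)$ (with contiguous indices distinct) to all of $*_J\ga$: one must know that finite linear combinations of such reduced words are norm-dense in the free product, which is the standard structure theorem for $*_J\ga$ (see \cite{VDN}), and that the actions $\a_g$ extend to genuine $*$-automorphisms of the $C^*$-completion — this again follows from the universal property, since $g$ permuting the injections yields a $*$-homomorphism $*_J\ga\to*_J\ga$ with a two-sided inverse induced by $g^{-1}$. With these structural facts in hand the rest is bookkeeping: matching GNS data, checking unitality, and verifying that the two constructions invert one another up to unitary equivalence. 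I expect no conceptual obstacle beyond correctly invoking the universal property and the density of reduced words.
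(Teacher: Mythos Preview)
Your proposal is correct and follows essentially the same route as the paper: invoke the universal property of $*_J\ga$ to obtain $\pi$ with $\pi\circ i_j=\iota_j$, set $\f=\langle\pi(\,\cdot\,)\Om,\Om\rangle$ and identify $(\pi,\ch,\Om)$ with the GNS triple via cyclicity; for the converse compose $\pi_\f$ with the canonical injections; and handle the symmetry statement by checking invariance on reduced words and extending by density. The paper's proof is terser (it defers the cyclicity/GNS verification and the symmetry argument to \cite{CrFid}, Theorem 3.3), whereas you spell out the density and automorphism-extension steps explicitly, but there is no substantive difference in strategy.
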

\begin{proof}
Take a quadruple $\big(\ga,\ch,\{\iota_j\}_{j\in J},\Om\big)$ and consider the universal property of the free product $C^*$-algebra $*_J\ga$ together with the corresponding $*$-monomorphisms $i_j:\ga\to *_J\ga$, $j\in J$. Then there exists a
$C^*$-homomorphism $\pi:*_J\ga\to\cb(\ch)$, that is a representation making commutative the diagram
\begin{equation*}
\xymatrix{ \ga \ar[r]^{i_j} \ar[d]_{\iota_{j}} &
{*_J\ga}\ar[dl]^\pi \\
\cb(\ch)}\,,\quad j\in J\,.
\end{equation*}
It is easily seen that
$$
\f(X):=\langle\pi(X)\Om,\Om\rangle\,,\quad X\in *_J\ga\,,
$$
defines a state whose GNS representation is precisely $(\pi,\ch,\Om)$, see Theorem 3.3 in \cite{CrFid}. Conversely, for each state $\f\in\cs(*_J\ga)$ with GNS representation $(\pi_\f,\ch_\f,\Om_\f)$, one can define the collection of $*$-homomorphisms $\iota_j:\ga\to \cb(\ch_\f)$ by $\iota_j:=\pi_\f\circ i_j$, $j\in J$.
It is straightforward to check that the quadruple $\big(\ga,\ch_\f,\{\iota_j\}_{j\in J},\Om_\f\big)$ is a stochastic process according to Definition \ref{upun}.

Finally one can see, as in Theorem 3.3 in \cite{CrFid}, that exchangeable or stationary stochastic processes correspond to symmetric or shift invariant states.
\end{proof}
\begin{defin}
\label{abcom}
If $\ga$ is abelian, then the stochastic process is called {\it commutative or classical} if, for the homomorphisms $\iota_j$ in Definition \ref{upun},
$$
\iota_{j_k}(A)\iota_{j_l}(B)=\iota_{j_l}(B)\iota_{j_k}(A)\,,\,\, j_k ,j_l\in J\,, A,B\in\ga\,.
$$
\end{defin}
\vskip0.2cm
\noindent
One immediately recognises Definition \ref{abcom} covers all stochastic processes arising in Classical Probability.

Consider the {\it free abelian product} unital $C^*$-algebra $\ab_J\ga$ of a single, not necessarily abelian $C^*$-algebra $\ga$. It is the universal object among the $C^*$-algebras, for the morphisms with commuting ranges. In other words, if $\{\r_{j}\}_{j\in J}$ is a collection of $*$-homomorphisms such that
$$
\r_{j_1}(A_1)\r_{j_2}(A_2)=\r_{j_2}(A_2)\r_{j_1}(A_1)\,,\quad j_1\neq j_2\,,A_1,A_2\in\ga\,,j_1,j_2\in J\,,
$$
then $\ab_J\ga$ is the universal (unital) $C^*$-algebra making commutative
\begin{equation*}
\xymatrix{ \ga \ar[r]^{r_j} \ar[d]_{\r_{j}} &
{\ab_J\ga}\ar[dl]^{{\rm P}}\\
\gb}\,,\quad j\in J\,,
\end{equation*}
where each $r_j$ is the canonical embedding.
Such a universal object can be described in the following way.

Consider the norm closed two-sided ideal
$$
\gi:=\overline{\big(\text{span}\{a[i_1(A_1),i_2(A_2)]b\mid a,b\in *_J\ga,A_1,A_2\in\ga,i_1,i_2\in J,i_1\neq i_2\}\big)}^{\|\,\,\|}
$$
which is the smallest one containing all commutators in $*_J\ga$ of the form $[i_1(A_1),i_2(A_2)]$, $i_1\neq i_2$. Thus $\ab_J\ga=*_J\ga/\sim$, where the relation $\sim$ is that associated to the closed two-sided ideal $\gi$ above. By
$P:*_J\ga\to\ab_J\ga$ we denote the associated quotient map.

For a fixed unital $C^*$-algebra $\ga$ and a finite subsets $I,I_1,I_2\subset J$, define the $|I|$-times projective $C^*$-tensor product (cf. Section IV.4 of \cite{Tak})
$$
\ga_I:=\underbrace{\ga\otimes_\text{max}\cdots\otimes_\text{max}\ga}_{|I|\,\, \text{times}}
$$
together with the canonical embedding
$$
\ga_{I_1}\sim\ga_{I_1}\otimes\idd_{\ga_{I_2\backslash I_1}}\subset\ga_{I_2}\,,\quad I_1\subset I_2\,.
$$
It is then possible to form the $C^*$-inductive limit (cf. Section L.2 of \cite{WO}) denoted as
$$
\otimes^\text{max}_J\ga:=
\lim_{\longrightarrow}\!{}_{{}_{I\uparrow J}}\ga_I\,.
$$
We point out the fact that all above considerations can be extended to the non unital case described in Section 3 of \cite{CrFid} either by using an approximate unity which always exists in any
$C^*$-algebra, or by adding a unity to $\ga$. The reader is referred to Section 2.2.3 of \cite{BR1} or Section IV.4 of \cite{Tak}.
\vskip.6cm
\begin{rem}
By Proposition IV.4.7 in \cite{Tak}, we have
$$
\ab_J\ga\sim\otimes^\text{max}_J\ga\,.
$$
If in addition $\ga$ is commutative, i.e. $\ga\sim C(I)$ for a compact space $I$, then
$$
\ab_J\ga\sim C\big(\prod_J I\big)
$$
as $C(I)$ is nuclear. We refer the reader to Theorem 36.1 in \cite{Bill} for the explicit construction of the probability measure corresponding to the stochastic process under consideration.
\end{rem}
\vskip0.2cm
\noindent
Recall that an abelian stochastic process uniquely determines a state $\f\in\cs(*_J\ga)$. The incoming result shows how to perform commutative stochastic processes in this picture.
\begin{prop}
\label{propab}
For an abelian $C^{*}$-algebra $\ga$ of samples, a stochastic process $\big(\ga,\ch,\{\iota_j\}_{j\in J},\Om\big)$, and the corresponding state $\f\in\cs(*_J\ga)$ according to Theorem \ref{mmin}, the following are equivalent:
\begin{itemize}
\item[(i)] $\f$ is the pull back on $*_J\ga$ of a state $\om\in\cs(\ab_J\ga)$, i.e. $\f=\om\circ P$;
\item[(ii)] the stochastic process $\big(\ga,\ch,\{\iota_j\}_{j\in J},\Om\big)$ is commutative.
\end{itemize}
\end{prop}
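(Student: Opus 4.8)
The plan is to work throughout with the GNS realisation supplied by Theorem~\ref{mmin}: the state $\f$ has GNS triple $(\pi_\f,\ch,\Om)$ and $\iota_j=\pi_\f\circ i_j$ for every $j\in J$. I shall also use the presentation $\ab_J\ga=*_J\ga/\gi$, where $\gi$ is the norm-closed two-sided ideal generated by the cross commutators $[i_j(A),i_k(B)]$ with $j\neq k$, and $P:*_J\ga\to\ab_J\ga$ is the quotient map. The point is that commutativity of the process, vanishing of $\pi_\f$ on $\gi$, and vanishing of $\f$ on $\gi$ are essentially the same condition once read through the GNS picture.

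First I would establish (ii)$\Rightarrow$(i). If the process is commutative, then for $j\neq k$ and $A,B\in\ga$ we have $\pi_\f\big([i_j(A),i_k(B)]\big)=\iota_j(A)\iota_k(B)-\iota_k(B)\iota_j(A)=0$; hence $\pi_\f$ annihilates each generator $a[i_j(A),i_k(B)]b$ of $\gi$, and by linearity and norm-continuity it annihilates the whole closed ideal $\gi$. By the universal property of the $C^*$-quotient, $\pi_\f$ factors as $\pi_\f=\tilde\pi\circ P$ for a unital $*$-representation $\tilde\pi$ of $\ab_J\ga$ on $\ch$. Then $\om:=\langle\tilde\pi(\,\cdot\,)\Om,\Om\rangle$ is a state on $\ab_J\ga$: positivity comes from $\om(Y^*Y)=\|\tilde\pi(Y)\Om\|^2\geq0$, and $\om(\idd)=\langle\tilde\pi(P(\idd))\Om,\Om\rangle=\langle\Om,\Om\rangle=1$ since $P$ and $\pi_\f$ are unital. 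Finally $\f=\langle\pi_\f(\,\cdot\,)\Om,\Om\rangle=\langle\tilde\pi(P(\,\cdot\,))\Om,\Om\rangle=\om\circ P$, which is (i).

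Conversely, for (i)$\Rightarrow$(ii), assume $\f=\om\circ P$. Fix $j\neq k$, $A,B\in\ga$ and set $C:=[i_j(A),i_k(B)]$. For arbitrary $a,b\in *_J\ga$ the element $aCb$ lies in $\gi=\ker P$, so $\f(aCb)=\om\big(P(aCb)\big)=0$; rewriting this as $\langle\pi_\f(C)\pi_\f(b)\Om,\pi_\f(a^*)\Om\rangle=0$ and using that $\Om$ is cyclic for $\pi_\f(*_J\ga)$ — so that $\pi_\f(b)\Om$ and $\pi_\f(a^*)\Om$ run over dense subsets of $\ch$ — we conclude $\pi_\f(C)=0$, i.e. $\iota_j(A)\iota_k(B)=\iota_k(B)\iota_j(A)$. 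The diagonal case $j=k$ is immediate from abelianness of $\ga$: $\iota_j(A)\iota_j(B)=\iota_j(AB)=\iota_j(BA)=\iota_j(B)\iota_j(A)$. Thus the process satisfies Definition~\ref{abcom}.

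None of the steps is genuinely hard; the points that need some care are the passage from the generators of $\gi$ to the full closed ideal (legitimate because $\gi$ is norm-closed while $\pi_\f$ and $\f$ are continuous), the verification that the functional $\om$ produced in the first implication is a bona fide state on $\ab_J\ga$, and the density argument based on cyclicity of $\Om$ turning the vanishing of the matrix coefficients $\langle\pi_\f(C)\pi_\f(b)\Om,\pi_\f(a^*)\Om\rangle$ into $\pi_\f(C)=0$. Abelianness of $\ga$ enters only to close the equal-index case of Definition~\ref{abcom}.
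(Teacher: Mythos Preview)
Your proof is correct and follows essentially the same line as the paper's: both directions amount to factoring $\pi_\f$ through the quotient $P:*_J\ga\to\ab_J\ga$. The only cosmetic differences are that for (i)$\Rightarrow$(ii) the paper identifies the GNS triple of $\f$ explicitly as $(\pi_\om\circ P,\ch_\om,\Om_\om)$ and reads off commutativity directly (rather than your cyclicity/density argument for $\pi_\f(C)=0$), and for (ii)$\Rightarrow$(i) it invokes the universal property of $\ab_J\ga$ for the commuting family $\{\iota_j\}$ into the abelian algebra $M$ instead of checking $\gi\subset\ker\pi_\f$ --- two phrasings of the same fact.
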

\begin{proof}
We treat the unital case. In absence of unity we can recover the same result arguing as in \cite{CrFid}.

(i) $\Rightarrow$ (ii) If $(\pi_\om,\ch_\om,\Om_\om)$ is the GNS representation of $\om$, the corresponding GNS representation of $\f$ and $*$-homomorphisms $\iota_j$, $j\in J$, are given by $(\pi_\om\circ P,\ch_\om,\Om_\om)$ and $\pi_\om\circ P\circ i_j$, respectively. We compute for each $j_1\neq j_2$,
\begin{align*}
&\iota_{j_1}(A_1)\iota_{j_2}(A_2)=\pi_\om\big(P(i_{j_1}(A_1))\big)\pi_\om\big(P(i_{j_2}(A_2))\big)\\
=&\pi_\om\big(P(i_{j_1}(A_1))P(i_{j_2}(A_2))\big)=\pi_\om\big(P(i_{j_2}(A_2))P(i_{j_1}(A_1))\big)\\
=&\pi_\om\big(P(i_{j_2}(A_2))\big)\pi_\om\big(P(i_{j_1}(A_1))\big)=\iota_{j_2}(A_2)\iota_{j_1}(A_1)\,.
\end{align*}
(ii) $\Rightarrow$ (i) As it is shown in Theorem \ref{mmin}, we get
\begin{equation}
\label{abel}
\f=\langle\pi(\,{\bf\cdot}\,)\Om,\Om\rangle\,,
\end{equation}
where $\pi$ is the unique homomorphism making commutative the diagram
\begin{equation*}
\xymatrix{ \ga \ar[r]^{i_j} \ar[d]_{\iota_{j}} &
{*_J\ga}\ar[dl]^\pi \\
M}\,,\quad j\in J\,,
\end{equation*}
and $M=\bigvee_{j\in J}\iota_j(\ga)$ is the von Neumann algebra acting on $\ch$ generated by all images $\iota_j(\ga)$, $j\in J$. Since $M$ is abelian, for each $j\in J$ and the embeddings
$r_j:\ga\rightarrow \ab_J\ga$, $i_j:\ga\rightarrow *_J\ga$, the universal properties applied to $*_J\ga$ and $\ab_J\ga$ respectively, give
$P\circ i_j=r_j$ and the existence of a unique $\s:\ab_J\ga\rightarrow M$, such that
$\s\circ r_j= \iota_j$, $j\in J$. Then $\s\circ P\circ i_j=\iota_j$, and consequently, $\pi=\s\circ P$. If
$\om:=\langle\s(\,{\bf\cdot}\,)\Om,\Om\rangle\in\cs(\ab_J\ga)$,
\eqref{abel} gives
$$
\f=\langle\pi(\,{\bf\cdot}\,)\Om,\Om\rangle=\langle\s\circ P(\,{\bf\cdot}\,)\Om,\Om\rangle
=\om\circ P\,.
$$
\end{proof}
\noindent
We have just shown that the general quantum scenario described in the first part of the section includes, as a particular case, the classical scheme. The latter is indeed achieved by the commutative diagram
\begin{equation}
\label{unni1}
\xymatrix{ \ga \ar[r]^{i_j} \ar[d]_{\b_{j}} &
{*_J\ga}\ar[dl]^\Phi \\
\gb}\,,\quad j\in J\,,
\end{equation}
where $\gb=\ab_J\ga$, $\b_{j}=r_j$, $j\in J$, and $\Phi=P$ are the canonical embeddings of $\ga$ in $\ab_J\ga$ and the canonical projection of $*_J\ga$ onto its abelianised
$\ab_J\ga$, respectively. Thus any $\f\in \cs(*_J\ga)$ realising a classical stochastic process (cf. Theorem \ref{mmin}) is obtained by a state $\om$ on $\ab_J\ga$ through a pull back relation.

The basic idea yielding the above result is taking a suitable quotient of the free $C^*$-algebra. Hence it appears clear that in the general case there are several ways to consider processes, each of them arising from factoring out $*_J\ga$ by two-sided ideals generated by suitable commutators, and the commutative diagram \eqref{unni1} can be seen as the most general situation describing quantum stochastic processes.
To get a flavour we mention the so called $q$-deformed relations for $q\in [-1,1]$, with pivotal examples given by $q=\pm 1$ corresponding to the Bose/Fermi cases, and $q=0$ corresponding to the Boltzmann case describing the group reduced $C^{*}$-algebra of the free group on infinitely many generators \cite{BS}, see Section \ref{41} for further details. Other noteworthy cases are the Monotone \cite{CFL} and the Boolean cases \cite{CrFid, F}. As possible future investigations we also mention the cases arising from the more general setting of interacting Fock spaces, see e.g. \cite{AB}.

Concerning the Bose case (cf. Section \ref{42}), consider the infinite tensor product $C^*$-algebra $\otimes^\text{min}_J\ga$ as in \cite{St2} for the non necessarily abelian algebra of samples $\ga$, together with the canonical projection $\F:*_J\ga\to\otimes^\text{min}_J\ga$ recovered by universality by the embeddings $t_j:\ga\to\otimes^\text{min}_J\ga$, $j\in J$. As the $t_j$ have commuting ranges, $\F$ factors through
the canonical projection $\Psi:\ab_J\ga\to\otimes^\text{min}_J\ga$, i.e. $\F=\Psi\circ P$. Accordingly, the stochastic process determined by a state $\om\in\cs(\otimes^\text{min}_J\ga)$ such that $\f=\om\circ\F$, factors through $\ab_J\ga$, as $\f=\om\circ\Psi\circ P$. We have then the following
\begin{rem}
Each stochastic process on $\otimes^\text{min}_J\ga$
comes from a stochastic process on the free abelianised product $\ab_J\ga\sim\otimes^\text{max}_J\ga$ uniquely determined by the state $\om\circ\Psi\in\cs(\ab_J\ga)$. The same construction holds true for stochastic processes on any other infinite tensor product $\otimes^\g_J\ga$ based on the $C^*$-cross norm $\|\,\,\|_\g$, see Section IV.4 of \cite{Tak}.
\end{rem}
We end the section recalling the definition, useful in the sequel, of the {\it tail algebra} $\gz^\perp_\f$ for the stochastic process $(\ga, \ch, (\iota_j)_{j\in J}, \Om)$, with corresponding state $\f\in\cs(*_J\ga)$
$$
\gz^\perp_\f:=\bigwedge_{\begin{subarray}{l}I\subset J,\,
I \text{finite} \end{subarray}}\left(\bigcup_{\begin{subarray}{l}K\bigcap I=\emptyset,
\\\,\,\,K \text{finite} \end{subarray}}\left( \bigvee_{k\in K}\iota_k(\ga)\right)\right)''
$$
In Statistical Mechanics it is known as the {\it algebra at infinity}, see e.g. \cite{BR1}.

\section{ergodic properties of stochastic processes}
\label{sec3}

The present section is devoted to the investigation of natural ergodic properties of stochastic processes. Here is also performed a direct link between algebraic relations and ergodic conditions.

Let $(\ga, \a)$ be a $C^*$-dynamical system with $\cs_{G}(\ga)=\{\om\}$. It is said to be {\it uniquely ergodic}. When $G=\bz$, one can see that unique ergodicity is equivalent to
\begin{equation}
\label{eav}
\lim_{n\rightarrow+\infty} \frac{1}{n}\sum_{k=0}^{n-1}f(\alpha^k(a))=f(\idd)\omega(a)\,, \quad a\in \ga\,,f\in\ga^*\,,
\end{equation}
or again to
$$
\lim_{n\rightarrow+\infty} \frac{1}{n}\sum_{k=0}^{n-1} \alpha^k(a)=\omega(a)\idd, \,\,\,\,\,\, a\in \ga\,\,,
$$
pointwise in norm. Some natural generalisations of such a strong ergodic property can be achieved by replacing the ergodic average \eqref{eav} with
$$
\lim_{n\rightarrow+\infty} \frac{1}{n}\sum_{k=0}^{n-1}|f(\alpha^k(a))-f(\idd)\omega(a)|=0\,, \quad a\in \ga\,,f\in\ga^*\,,
$$
or simply
$$
\lim_{n\rightarrow+\infty} f(\alpha^n(a))=f(\idd)\omega(a)\,, \quad a\in \ga\,,f\in\ga^*\,,
$$
for some state $\om\in\cs(\ga)$ which is necessarily invariant. In this case, $(\ga,\a)$ is called {\it uniquely weak mixing} or {\it uniquely mixing}, respectively. For all these cases,
$\ga^\bz=\bc\idd$, and the (unique) invariant conditional expectation onto the fixed point subalgebra is precisely $E(a)=\om(a)\idd$.

Another natural generalisation is to look at the fixed point subalgebra whenever it is nontrivial, and at the unique invariant conditional expectation onto such a subalgebra
$E^\bz:\ga\to\ga^\bz$, provided the last exists. The unique ergodicity, weak mixing, and mixing w.r.t. the fixed point subalgebra (denoted also as $E^\bz$-ergodicity, $E^\bz$-weak mixing and $E^\bz$-mixing, $E^\bz$ being the invariant conditional expectation onto $\ga^\bz$ which necessarily exists) are given by definition, for $a\in\ga$ and $f\in\ga^*$, by
\begin{align*}
&\lim_{n\rightarrow+\infty} \frac{1}{n}\sum_{k=0}^{n-1}f(\alpha^k(a))=f(E^\bz(a))\,,\\
&\lim_{n\rightarrow+\infty} \frac{1}{n}\sum_{k=0}^{n-1}|f(\alpha^k(a))-f(E^\bz(a))|=0\,,\\
&\lim_{n\rightarrow+\infty} f(\alpha^n(a))=f(E^\bz(a))\,.
\end{align*}
Fix a dynamical system $(\ga,\a)$ based on the group $G$. For each state $\f\in\cs(\ga)$, we put $L^\infty(\ga,\f):=\pi_\f(\ga)''$, and $L^2(\ga,\f):=\pi_\f(\ga):=\ch_\f$. One can define a bounded linear map $T:L^\infty(\ga,\f)\to L^2(\ga,\f)$ given by
$$
TX:=X\Om_\f\,,\quad X\in L^\infty(\ga,\f)\,.
$$
Suppose $\f\in\cs_G(\ga)$. One can always denote by $\a$ the actions of $G$ on both $L^\infty(\ga,\f)$ and $L^2(\ga,\f)$ as $\a_g(X):=\ad_{U_\f(g)}(X)$ or $\a_g(\xi):=U_\f(g)\xi$. Such actions are compatible with the map $T$:
$$
T\ad\!{}_{U_{\f}(g)}(X)=U_\f(g)TX\,,\quad g\in G\,, X\in L^\infty(\ga,\f)\,.
$$
The set of the invariant elements are denoted respectively as
\begin{align*}
L^\infty(\ga,\f)^G:=&\pi_\f(\ga)''\bigwedge \{U_\f(G)\}'\,,\\
L^2(\ga,\f)^G:=\{\xi\in&\ch_\f\mid U_\f(g)\xi=\xi\,, g\in G\}\,.
\end{align*}
\noindent
\begin{defin}
A state $\f\in\cs_{G}(\ga)$ is said {\it weakly clustering} if
\begin{equation}
\label{eq1}
\lim_{\L\uparrow G}
\frac1{|\L|}\sum_{g\in\L}\f(A\a_g(B))=\f(A)\f(B)\,,\,\,\,A,B\in\ga
\end{equation}
\noindent
and it satisfies the {\it property of the convergence to the equilibrium} if
\begin{equation}
\label{eq11}
\lim_{\L\uparrow G}
\frac1{|\L|}\sum_{g\in\L}\f(A\a_g(B)C)=\f(AC)\f(B)\,,\,\,\,A,B,C\in\ga
\end{equation}
along the net $\{\L\subset G\mid |\L|<\infty\}$
\end{defin}
\vskip0.2cm
\noindent
We notice that, when $G=\bz$, i.e. one deals with the shift, one usually reduces the matter to
$\L_n:=[0,n-1]$. Condition \eqref{eq1} is obviously weaker than \eqref{eq11}. We can see that those are equivalent provided the support of $\f$ belongs to the centre of the bidual, i.e. $s(\f)\in Z(\ga^{**})$. They are equivalent also for (graded) asymptotically abelian states, that is for systems possibly including Fermions (cf. \cite{BF2, F16}), and in classical case. Furthermore a weakly clustering state $\f\in\cs_G(\ga)$ is automatically ergodic (i.e. extremal invariant), that is $\f\in\ce(\cs_G(\ga))$. The converse holds true in the particular case of $G$-abelian states, see
Section 4.3 of \cite{BR1}.

It is our aim, in the forthcoming lines, to prove that condition \eqref{eq11} is the right one ensuring the convergence to the equilibrium
in quantum case. Thus consider a physical system in a state $\om$ invariant for the (discrete) dynamics.
Then we have a dynamical system $(\ga, \a,\om)$ based on a single automorphism $\a$ on $\ga$. Such a localised perturbation usually produces a state $\f$ which is normal w.r.t. the reference state $\om$. Namely,
$\f\in\cf_{\pi_\om}(\ga)$ where, for each representation $(\pi,\ch_\pi)$ of $\ga$,
$$
\cf_{\pi}(\ga)=\{\f\in\cs(\ga)\mid\f=\tr(\pi(\,{\bf\cdot}\,)T)\,,T\in\ct(\ch_\pi)_{+,1}\}
$$
is the folium generated by $\pi$,
$\tr$ and $\ct(\ch_\pi)_{+,1}$ being the canonical trace on $\ch_\pi$ and the positive normalised trace class operators on it, respectively. The convergence to the equilibrium simply means
\begin{equation}
\label{equilibrium}
\lim_n\frac1{n}\sum_{k=0}^{n-1}\f(\a^n(A))=\om(A)\,,\quad A\in\ga\,,\f\in\cf_{\pi_\om}(\ga)\,,
\end{equation}
that is $\cf_{\pi_\om}(\ga)$ is contained in the {\it basin of attraction} of $\om$ in the whole $\cs(\ga)$.
By a standard approximation argument, we can reduce the matter to the generators of $\cf_{\pi_\om}(\ga)$ of the form
$$
\f_B(A):=\frac{\om(B^*AB)}{\om(B^*B)}\,,\quad A\in\ga\,,
$$
provided $\om(B^*B)\neq0$. It appears then evident the condition
\eqref{equilibrium} simply means
$$
\lim_n\frac1{n}\sum_{k=0}^{n-1}\om(B^*\a^n(A)B)=\om(B^*B)\om(A)\,,\quad A,B\in\ga
$$
which turns out to be equivalent to \eqref{eq11} by polarisation.

We report a suitable version of a pivotal result in noncommutative Ergodic Theory, well known to the experts. To avoid technicalities, from now on we reduce the matter to $\bp_J$ or $\bz$ if it is not otherwise specified.
Recall that for a $C^*$-algebra $\ga$, for $\f\in\cs(\ga)$ with support $s(\f)\in\ga^{**}$ and the corresponding GNS representation $(\pi_\f,\ch_\f,\Omega_\f)$, one has $s(\f)\in Z(\ga^{**})$ if and only if
$\Omega_\f$ is cyclic for $\pi_\f(\ga)'$.
\begin{thm}
\label{eeerg}
Let $(\ga,\a)$ be a dynamical system where $G$ is $\bp_J$ or $\bz$. With the net generated by all finite subgroups $\bp_I$, $\{I\subset J\mid |I|<\infty\}$ for $\bp_J$, or the sequence $[0,n-1]$,
$n\in\bn$ for $\mathbb{Z}$ respectively, and $\f\in\cs_{G}(\ga)$, consider the following assertions:
\begin{itemize}
\item[(i)] $\f$ is weakly clustering,
\item[(ii)] $L^2(\ga,\f)^G=\bc\Om_\f$,
\item[(iii)] $\f$ satisfies the property of the convergence to the equilibrium,
\item[(iv)] $L^\infty(\ga,\f)^G=\bc I$.
\end{itemize}
Then we have ${\rm(iv)}\Longrightarrow{\rm(iii)}\Longrightarrow{\rm(ii)}\iff{\rm(i)}$, and all conditions are equivalent if $s(\f)\in Z(\ga^{**})$.
\end{thm}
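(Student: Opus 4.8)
The plan is to route all four conditions through the mean ergodic theorem on the GNS Hilbert space $\ch_\f$ and, for the passage to the $W^*$-level, through a $\sigma$-weak compactness argument for operator averages. Write $A_\L:=\frac1{|\L|}\sum_{g\in\L}U_\f(g)\in\cb(\ch_\f)$. In both cases the chosen net is a F\o lner net: for $G=\bz$ it is $\{[0,n-1]\}$, while for $G=\bp_J$ the average over the \emph{finite} subgroup $\bp_I$ is exactly the orthogonal projection onto the $\bp_I$-invariant vectors and these projections decrease to the projection onto the $\bp_J$-invariant ones. Hence the mean ergodic theorem gives $A_\L\to E$ strongly, where $E$ denotes the orthogonal projection of $\ch_\f$ onto $L^2(\ga,\f)^G$; note $E\Om_\f=\Om_\f$ and $E$ commutes with each $U_\f(g)$.

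For (i)$\iff$(ii) I would use $\pi_\f(\a_g(B))\Om_\f=U_\f(g)\pi_\f(B)\Om_\f$ to rewrite
\[\frac1{|\L|}\sum_{g\in\L}\f(A\a_g(B))=\langle\pi_\f(A)\,A_\L\,\pi_\f(B)\Om_\f,\Om_\f\rangle\,,\]
which converges to $\langle\pi_\f(A)E\pi_\f(B)\Om_\f,\Om_\f\rangle$ as $\L\uparrow G$. If (ii) holds then $E=\langle\,\cdot\,,\Om_\f\rangle\Om_\f$ and the limit is $\f(A)\f(B)$, i.e. (i). Conversely (i) says $\langle\pi_\f(A)\big(E\pi_\f(B)\Om_\f-\f(B)\Om_\f\big),\Om_\f\rangle=0$ for all $A,B\in\ga$; since $\{\pi_\f(A)^*\Om_\f\mid A\in\ga\}$ is dense in $\ch_\f$ by cyclicity of $\Om_\f$, this forces $E\pi_\f(B)\Om_\f=\f(B)\Om_\f$, and then density of $\pi_\f(\ga)\Om_\f$ together with boundedness of $E$ gives $E=\langle\,\cdot\,,\Om_\f\rangle\Om_\f$, i.e. (ii). Also (iii)$\Rightarrow$(i) is immediate on putting $C=\idd$ in \eqref{eq11}.

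The crux is (iv)$\Rightarrow$(iii). For $B\in\ga$ I would consider $M_\L(B):=\frac1{|\L|}\sum_{g\in\L}\ad_{U_\f(g)}\big(\pi_\f(B)\big)=\frac1{|\L|}\sum_{g\in\L}\pi_\f(\a_g(B))$, a net in $\pi_\f(\ga)''$ of norm $\le\|B\|$, hence taking values in a $\sigma$-weakly compact ball. Since each $\ad_{U_\f(h)}$ is $\sigma$-weakly continuous and the net is F\o lner, every $\sigma$-weak cluster point $\bar B$ of $\{M_\L(B)\}$ satisfies $\ad_{U_\f(h)}(\bar B)=\bar B$ for all $h\in G$, so $\bar B\in\{U_\f(G)\}'$; as $\pi_\f(\ga)''$ is $\sigma$-weakly closed and contains each $M_\L(B)$, also $\bar B\in\pi_\f(\ga)''$, whence $\bar B\in L^\infty(\ga,\f)^G=\bc I$ by (iv). Testing against $\Om_\f$ and using $\langle M_\L(B)\Om_\f,\Om_\f\rangle=\frac1{|\L|}\sum_{g\in\L}\f(\a_g(B))=\f(B)$ by invariance of $\f$ identifies $\bar B=\f(B)I$. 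A bounded net in a compact Hausdorff space with a unique cluster point converges to it, so $M_\L(B)\to\f(B)I$ $\sigma$-weakly; pairing this convergence with the vectors $\pi_\f(C)\Om_\f$ and $\pi_\f(A)^*\Om_\f$ then gives $\frac1{|\L|}\sum_{g\in\L}\f(A\a_g(B)C)\to\f(B)\f(AC)$, which is (iii).

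Finally, to close the loop when $s(\f)\in Z(\ga^{**})$ I would invoke the criterion recalled just before the statement: this means $\Om_\f$ is cyclic for $\pi_\f(\ga)'$, equivalently separating for $\pi_\f(\ga)''$. Assuming (ii), take $T\in\pi_\f(\ga)''\bigwedge\{U_\f(G)\}'$; then $T\Om_\f$ is $U_\f(G)$-invariant, hence $T\Om_\f=\l\Om_\f$ for some $\l\in\bc$ by (ii), and since $\Om_\f$ is separating for $\pi_\f(\ga)''$ this forces $T=\l I$. Thus (ii)$\Rightarrow$(iv), and together with (iv)$\Rightarrow$(iii)$\Rightarrow$(i)$\iff$(ii) all four conditions become equivalent. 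The step I expect to be the main obstacle is the $\sigma$-weak convergence in (iv)$\Rightarrow$(iii): the operator averages $M_\L(B)$ need not converge in any stronger topology, so the argument must be run purely through compactness and uniqueness of the cluster point, and one must carefully check that the limit lies \emph{simultaneously} in $\pi_\f(\ga)''$ and in $\{U_\f(G)\}'$ so that (iv) can be applied.
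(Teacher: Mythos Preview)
Your argument is correct. The equivalence (i)$\iff$(ii), the trivial implication (iii)$\Rightarrow$(i), and the closing step (ii)$\Rightarrow$(iv) under the central-support hypothesis are handled exactly as one would expect (and essentially as the paper does, by appeal to the mean ergodic theorem and the separating property of $\Om_\f$).

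Where you genuinely diverge from the paper is in the proof of (iv)$\Rightarrow$(iii). The paper does \emph{not} argue directly: it first observes (via Theorem~4.3.20 of \cite{BR1}) that (iv) already forces $s(\f)\in Z(\ga^{**})$; once that is in hand, (iv)$\Rightarrow$(ii)$\iff$(i), and the paper then invokes the fact---stated before the theorem but not proved there---that (i) and (iii) coincide when the support is central. In other words, the paper routes (iv)$\Rightarrow$(iii) through the central-support condition and through the (unproved in the text) equivalence (i)$\iff$(iii) under that condition. Your route is instead a direct $\s$-weak compactness/F{\o}lner argument on the operator averages $M_\L(B)$, identifying every cluster point as $\f(B)I$ via (iv) and then deducing convergence from uniqueness of the cluster point in a compact Hausdorff space. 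This is cleaner and self-contained: you never need to know that (iv) implies centrality of the support, nor the separate equivalence (i)$\iff$(iii) in the central case. The trade-off is that the paper's route makes explicit the structural reason behind the collapse of all four conditions (namely, that (iv) already puts you in the ``commutative-like'' regime $s(\f)\in Z(\ga^{**})$), whereas your argument, while more elementary, hides this.
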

\begin{proof}
The equivalence ${\rm(i)}\iff{\rm(ii)}$ is nothing but the Mean Ergodic Theorem of J. von Neumann, see e.g. Section 4.3 of \cite{BR1}, and Proposition 3.1 of \cite{CrF}. The equivalence
${\rm(ii)}\iff{\rm(iv)}$ is well known (cf. Section 4.3 of \cite{BR1}), provided $s(\f)\in Z(\ga^{**})$.  We now report ${\rm(iv)}\Longrightarrow{\rm(iii)}$.

By reasoning as in Theorem 4.3.20 of \cite{BR1}, first we note that ${\rm(iv)}$ implies that $s(\f)\in Z(\ga^{**})$. In addition, ${\rm(iv)}\Longrightarrow{\rm(ii)}$, which is equivalent to (i). The latter turns out to be equivalent to (iii) as $s(\f)\in Z(\ga^{**})$.
\end{proof}
\noindent
In the following definition we present the algebraic properties of a stochastic process on $\ga$, or equivalently of the corresponding state on $*_J\ga$, which will be linked with the above ergodic conditions. To achieve also the shift on the chain, we specialise the matter to $*_\bz\ga$.
\begin{defin}
\label{bspsc}
The state $\f\in\cs(*_\bz\ga)$ is said to satisfy the \emph{product state condition} if
$$
\f(A_1A_2)=\f(A_1)\f(A_2)\,,
$$
whenever $A_k\in{\rm alg}\{i_{j_k}(\ga)\mid j_k\in I_k\}$, $I_k\subset J$, $k=1,2$, and
$\quad I_1\cap I_2=\emptyset$.

The state $\f$ satisfies the {\it block singleton condition} \cite{ABCL} if
$$
\f(A_1A_2A_3)=\f(A_1A_3)\f(A_2)\,,
$$
whenever $A_k\in{\rm alg}\{i_{j_k}(\ga)\mid j_k\in I_k\}$, $I_k\subset J$, $k=1,2,3$, and
$(I_1\cup I_3)\cap I_2=\emptyset$.
\end{defin}
\vskip0.2cm
\noindent
The next result glues the above conditions with those described in Theorem \ref{eeerg}. We refer the reader to \cite{CrFid} for further details.
\begin{thm}
\label{pscm}
For $G$ as in Theorem \ref{eeerg} and $\f\in\cs_{G}(*_\bz\ga)$, the following assertions hold true.
\begin{itemize}
\item[(i)] $\f$ satisfies the product state condition if and only if it is weakly clustering,
\item[(ii)] $\f$ is a block singleton state if and only if it satisfies the property of the convergence to the equilibrium.
\end{itemize}
\end{thm}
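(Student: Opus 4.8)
The plan is to establish all four implications in (i) and (ii) by reducing everything to the word generators $X=i_{j_1}(A_1)\cdots i_{j_n}(A_n)$ of $*_\bz\ga$ (with contiguous distinct indices, $A_l\in\ga$), each of which is localised on a finite index set $I_X\subset\bz$. Since $\|\f\|=1$, the Ces\`aro averages $\frac1{|\L|}\sum_{g\in\L}\f(A\,\a_g(B))$ and $\frac1{|\L|}\sum_{g\in\L}\f(A\,\a_g(B)\,C)$ depend norm-continuously on $A,B,C$ uniformly in $\L$, so it suffices to test weak clustering \eqref{eq1}, the block singleton and product state conditions of Definition \ref{bspsc}, and condition \eqref{eq11} (which by definition is the property of the convergence to the equilibrium appearing in Theorem \ref{eeerg}) on such generators.

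I would first dispatch the implications ``algebraic $\Rightarrow$ ergodic'', namely the ``only if'' of (i) and the ``if'' of (ii), for both choices of $G$. Let $A,B$ (resp.\ $A,B,C$) be generators localised on $I_A,I_B$ (resp.\ $I_A,I_B,I_C$), and let $g$ run over the relevant net --- the finite permutations $\bp_I$, $I\uparrow\bz$, or the translations $k\in[0,n-1]$. The support $g(I_B)$ of $\a_g(B)$ meets $I_A$ (resp.\ $I_A\cup I_C$) only for a vanishing proportion of the $g$'s: for at most a fraction $|I_A|\,|I_B|/|I|\to 0$ of $\bp_I$, and for at most $|I_A|\,|I_B|$ values of $k$ in the shift case. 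For every remaining $g$ the product state condition (resp.\ the block singleton condition), together with the $G$-invariance of $\f$, yields $\f(A\,\a_g(B))=\f(A)\f(\a_g(B))=\f(A)\f(B)$ (resp.\ $\f(A\,\a_g(B)\,C)=\f(AC)\f(\a_g(B))=\f(AC)\f(B)$); since the exceptional terms contribute $O(1/|\L|)$, the averages converge to $\f(A)\f(B)$ (resp.\ $\f(AC)\f(B)$), which is \eqref{eq1} (resp.\ \eqref{eq11}).

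For the converse implications I would bring in Theorem \ref{eeerg}, which identifies weak clustering with $L^2(*_\bz\ga,\f)^G=\bc\Om_\f$ --- equivalently, with the ergodic means $\frac1{|\L|}\sum_{g\in\L}U_\f(g)$ converging strongly to the orthogonal projection onto $\bc\Om_\f$ --- and the convergence to the equilibrium with condition (iii) there. In the exchangeable case $G=\bp_\bz$ the averages actually \emph{collapse} to a single term: if $A$ is localised on $I_A$ and $B$ on $I_B$ with $I_A\cap I_B=\emptyset$, then for each finite permutation $g$ with $g(I_B)\cap I_A=\emptyset$ one chooses $h\in\bp_\bz$ fixing $I_A$ pointwise and with $hg$ acting as the identity on $I_B$ (possible since $I_B$ and $g(I_B)$ are both disjoint from $I_A$ and $\bz$ is infinite), whence $\f(A\,\a_g(B))=\f\big(\a_h(A)\,\a_{hg}(B)\big)=\f(AB)$; since almost every $g\in\bp_I$ is of this type, the Ces\`aro mean tends to $\f(AB)$, and comparison with weak clustering forces the product state identity $\f(AB)=\f(A)\f(B)$. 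The same collapse, carried out with a third factor $C$ localised on $I_C$ and permutations fixing $I_A\cup I_C$ pointwise, turns \eqref{eq11} into $\f(ABC)=\f(AC)\f(B)$, that is, the block singleton condition.

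The main obstacle will be the converse implications in the genuinely one-sided case $G=\bz$, where this collapse is unavailable: $\f(A\,\a^k(B))$ need only be Ces\`aro convergent, not eventually constant in $k$, so the single value $\f(AB)$ cannot be extracted from the average by bare support bookkeeping. To deal with it I would work inside the GNS triple $(\pi_\f,\ch_\f,\Om_\f)$, exploiting the free-product structure of $*_\bz\ga$ and the explicit form of the shift on the vectors $i_{j_1}(A_1)\cdots i_{j_n}(A_n)\Om_\f$, and using Proposition \ref{symsh} to pass to the symmetric picture where possible; this is the analysis carried out in detail in \cite{CrFid}, to which I would refer for the technical core. The residual steps --- density of the generators, finiteness of the supports, and estimating the remainders in the averages --- are routine.
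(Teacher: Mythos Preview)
Your treatment of $G=\bp_\bz$ is essentially the paper's own. The paper also splits into the two directions, uses that the fraction of $g\in\bp_I$ sending the support of the middle word off $I_u\cup I_w$ tends to~$1$ (their Lemma~3.3 of \cite{CrF}), and for the converse exploits invariance to rewrite $\f(uvw)$ as an average over the subgroup of $\bp_I$ fixing $I_u\cup I_w$ pointwise --- exactly your ``collapse'' via the auxiliary permutation $h$. One cosmetic slip: you label the implication block-singleton $\Rightarrow$ convergence-to-equilibrium as ``the `if' of (ii)'', but in (ii) the algebraic hypothesis sits on the left of the biconditional, so this is its ``only if''. Your actual arguments are oriented correctly, so this is just wording.

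Where your sketch becomes unreliable is the shift case $G=\bz$. Proposition~\ref{symsh} gives only the inclusion $\cs_{\bp_\bz}(*_\bz\ga)\subset\cs_\bz(*_\bz\ga)$; a merely shift-invariant $\f$ need not be symmetric, so you cannot ``pass to the symmetric picture where possible'' --- that step simply does not go through. The paper does not attempt any such reduction either: it proves (ii) in detail for $\bp_\bz$ and refers the shift case to \cite{ABCL} (not \cite{CrFid}, which treats the exchangeable situation). Your diagnosis that the collapse argument is unavailable for $\bz$, because a translation cannot move $B$ while holding $A$ fixed when their supports are interleaved, is correct; but the cure you indicate --- Proposition~\ref{symsh} together with GNS considerations borrowed from \cite{CrFid} --- is not what resolves it.
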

\begin{proof}
We report the case of the permutations (cf. \cite{CrFid}) corresponding to (ii), and leave to the reader the easier situation (i). The analogous case of the shift is in \cite{ABCL}.

Suppose that $\f\in\cs_{\bp_\bz}(*_\bz\ga)$ satisfies the property of the convergence to the equilibrium. Fix words $u,v,w\in*_\bz\ga$ such that their respective supports satisfy
$I_v\cap(I_u\cup I_w)=\emptyset$. Take $I\supset I_u,I_v, I_w$ an arbitrary large but finite part of $J$, and consider the set $B\subset \bp_I$ of the permutations leaving $I_u\cup I_w$ pointwise fixed. Since $\f$ is symmetric, we get
\begin{align*}
&\f(uvw)=\frac1{|B|}\sum_{g\in B}\f(\a_g(uvw))=\frac1{|B|}\sum_{g\in B}\f(u\a_g(v)w)\\
=&\frac{|\bp_I|}{|B|}\bigg(\frac1{|\bp_I|}\sum_{g\in\bp_I}\f(u\a_g(v)w)\bigg)
-\frac1{|B|}\sum_{g\in\bp_I\backslash B}\f(u\a_g(v)w)\,.
\end{align*}
By Lemma 3.3 in \cite{CrF}, $\frac{|B|}{|\bp_I|}\rightarrow 1$ and $\frac{|B^c|}{|B|}\rightarrow 0$, as
$I\uparrow J$. By taking the limit from both parts, the l.h.s. does not depend on $J$, whereas the r.h.s. converges to $\f(uw)\f(v)$ by the property of the convergence to the equilibrium. Thus $\f$ satisfies the block singleton condition.

Conversely, suppose now $\f\in\cs_{\bp_\bz}(*_\bz\ga)$ is a block singleton state. By density, we can reduce the matter to elementary words. Fix
$u,v,w\in*_\bz\ga$ with supports
$I_u,I_v, I_w$ respectively.
If $I$ is a finite part of $J$, define
$$
A:=\{g\in\bp_I\mid (I_u\cup I_w)\cap I_{\a_g(v)}=\emptyset\}\,.
$$
By applying the block singleton condition, we get
\begin{align*}
&\bigg|\frac1{|\bp_I|}\sum_{g\in\bp_I}\f(u\a_g(v)w)-\f(uw)\f(v)\bigg|\\
\leq&\bigg|\frac1{|\bp_I|}\sum_{g\in A}\f(u\a_g(v)w)- \f(uw)\f(v)\bigg|
+\bigg|\frac1{|\bp_I|}\sum_{g\in\bp_I\backslash A}\f(u\a_g(v)w)\bigg|\\
\leq&\bigg|\bigg(\frac{|A|}{|\bp_I|}-1\bigg)\f(uw)\f(v)\bigg|+\|u\|\|v\|\|w\|\frac{|A^c|}{|\bp_I|}\,,
\end{align*}
where $A^c:= \bp_I\backslash A$. Taking the limit $I\uparrow J$, again by Lemma 3.3 of \cite{CrF} one has that $\frac{|A|}{|\bp_I|}\to 1$ and
$\frac{|A^c|}{|\bp_I|}\rightarrow 0$. Thus $\f$ satisfies condition \eqref{eq11}.
\end{proof}
\noindent
As an application of the above results, one achieves some ergodic properties for the so-called Haagerup states \cite{Ha} naturally arising in Free Probability. For $\l\in(0,+\infty)$, those are defined on the group $C^*$-algebra $C^*(\bbf_\infty)$ of the free group $\bbf_\infty$ on infinitely many generators, as
$\f_\l(w):=e^{-\l|w|}$, where $w\in C^*(\bbf_\infty)$ is a reduced word and $|w|$ is its length. The case $\l=+\infty$ corresponds to the tracial state, the unique one passing to the quotient given by the reduced group algebra $C_{\rm red}^*(\bbf_\infty)$ and considered below.
The Haagerup states are automatically symmetric by construction, and satisfy the product state condition:
$\f_\l(vw)=\f_\l(v)\f_\l(w)$, $I_v\cap I_w=\emptyset$. But they do not fulfil the block singleton condition if $\l\in(0,+\infty)$. In fact, for elementary generators $g_i,g_j$ with $i\neq j$,
$$
\f_\l(g_ig_jg_i^{-1})=e^{-3\l}\neq e^{-\l}
=\f_\l(g_j)\f_\l(\idd)=\f_\l(g_j)\f_\l(g_ig_i^{-1})\,.
$$
\begin{cor}
For the Haagerup states $\f_\l\in\cs(C^*(\bbf_\infty))$ one has $s(\f_\l)\not\in Z(C^*(\bbf_\infty)^{**})$, $\l\in(0,+\infty)$.
\end{cor}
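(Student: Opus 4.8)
The plan is to combine the two observations already made about the Haagerup states with the machinery of Section~\ref{sec3}. The Haagerup state $\f_\l$ is symmetric, so it lies in $\cs_{\bp_\bz}(C^*(\bbf_\infty))$ once we realise $C^*(\bbf_\infty)$ as a quotient of $*_\bz\ga$ in the appropriate way (the $q=0$, i.e.\ Boltzmann, picture of Section~\ref{41}); alternatively one works directly with the permutation action on generators. Moreover $\f_\l$ satisfies the product state condition, hence by Theorem~\ref{pscm}(i) it is weakly clustering, i.e.\ condition (i) of Theorem~\ref{eeerg} holds. On the other hand, the explicit computation $\f_\l(g_ig_jg_i^{-1})=e^{-3\l}\neq e^{-\l}=\f_\l(g_j)\f_\l(\idd)$ shows $\f_\l$ is \emph{not} a block singleton state, so by Theorem~\ref{pscm}(ii) it does \emph{not} satisfy the property of the convergence to the equilibrium; that is, condition (iii) of Theorem~\ref{eeerg} fails.

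First I would invoke Theorem~\ref{eeerg}: there we have the implication chain ${\rm(iv)}\Rightarrow{\rm(iii)}\Rightarrow{\rm(ii)}\iff{\rm(i)}$, together with the statement that \emph{all four} conditions are equivalent as soon as $s(\f)\in Z(\ga^{**})$. Now argue by contradiction: if $s(\f_\l)\in Z(C^*(\bbf_\infty)^{**})$, then (i)--(iv) would all be equivalent for $\f_\l$. But we have just exhibited a state for which (i) holds and (iii) fails. This contradicts the equivalence, so $s(\f_\l)\notin Z(C^*(\bbf_\infty)^{**})$ for every $\l\in(0,+\infty)$.

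The one point that needs a little care — and which I expect to be the only real obstacle — is the bookkeeping identifying $C^*(\bbf_\infty)$ with (a quotient of) $*_\bz\ga$ so that Theorems~\ref{eeerg} and~\ref{pscm} literally apply, and checking that the permutation-invariance used there is exactly the symmetry the Haagerup states enjoy. Concretely one takes $\ga$ to be the group $C^*$-algebra of a single generator (so $\ga\cong C(\bt)$, the universal $C^*$-algebra on one unitary), lets $i_j$ send the generator of $\ga$ to the $j$-th free generator $g_j$, and observes that the resulting surjection $*_\bz\ga\to C^*(\bbf_\infty)$ intertwines the index-permutation action with the permutation of the generators of $\bbf_\infty$. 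Pulling $\f_\l$ back along this map gives a symmetric state on $*_\bz\ga$ to which the quoted theorems apply, and since supports and central supports are preserved under such identifications the conclusion transfers back to $\f_\l$ itself. Everything else is a direct citation of the results established above; no new estimate is required.
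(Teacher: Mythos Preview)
Your proof is correct and follows essentially the same route as the paper's: use Theorem~\ref{pscm}(i) to get weak clustering from the product state condition, assume centrality of the support to force equivalence of (i) and (iii) via Theorem~\ref{eeerg}, and derive a contradiction with the failure of the block singleton condition via Theorem~\ref{pscm}(ii). Your extra bookkeeping paragraph is a helpful clarification the paper leaves implicit; note, though, that the map $*_\bz C(\bt)\to C^*(\bbf_\infty)$ is in fact an isomorphism (since $C^*(*_\bz\bz)\cong *_\bz C^*(\bz)$ for full group $C^*$-algebras), so no genuine quotient is involved and the transfer of the support condition is immediate.
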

\begin{proof}
As $\f_\l$ satisfies the product state condition, by Theorem \ref{pscm} it is weakly clustering. Suppose that $s(\f_\l)\in Z(C^*(\bbf_\infty)^{**})$. By Theorem \ref{eeerg}, it satisfies the property of the convergence to the equilibrium. Then it is a block singleton state again by Theorem \ref{pscm}, which is a contradiction.
\end{proof}

\section{examples}
\label{sec4}
In this section we deal with stochastic processes directly built on concrete $C^*$-algebras.


\subsection{$q$-deformed commutation relations}
\label{41}
We briefly recall the $q$-deformed commutation relations, $q\in[-1,1]$:
\begin{equation}
\label{cre}
a_q(i)a_q^\dagger(j)-qa_q^\dagger(j)a_q(i)=\d_{ij}\idd\,,\,\, i,j\in\bz\,.
\end{equation}
The above commutation rule can be represented concretely as creators and annihilators on the $q$-deformed Fock spaces, see e.g. \cite{BS}. The remarkable cases of Bose (CCR), Fermi (CAR) and Boltzmann (Free) relations are realised for $q=\pm 1$ and $q=0$, respectively. We first treat the case $q\in(-1,1)$.

Let $\gar_q$ and $\gg_q$ the concrete unital $C^*$-algebras acting to the $q$-Fock space generated by the annihilators $\{a_q(i)\mid i\in\bz\}$, and by their selfadjoint part
$\{a_q(i)+a^\dagger_q(i)\mid i\in\bz\}$, respectively, for $a_q(i):=a_q(e_i)$ and $e_i$ the elements of the canonical basis of $\ell^2(\mathbb{Z})$. The group of the permutations
$\bp_\bz$, and the group $\bz$ generated by the powers of the shift naturally act on both $\gar_q$ and $\gg_q$ as Bogoliubov automorphisms implemented by the unitaries $Ue_i:=e_{i+1}$ and $U_ge_i:=e_{g(i)}$ on $\ell^2(\bz)$ (see \cite{CFL}, Proposition 3.1). We denote by $\ga_q$ one of these concrete $C^*$-algebras, and by
$G$ and $\a$ those groups and their actions, respectively. The vacuum expectation state is given by $\om_q:=\langle\,\cdot\,\Om_q, \Om_q\rangle$, $\Om_q$ being the vacuum vector in the
$q$-Fock space.

We report the following strong ergodic result, and refer the reader to \cite{CFL} for its proof.
\begin{thm}
Fix $q\in(-1,1)$ and consider a countable set $\{g_k\}_{k\in\bn}\subset G$. Then
$$
\lim_{n}\bigg\|\frac1{n}\sum_{k=1}^n\a_{g_k}(A)-\om_q(A)I\bigg\|=0\,,\quad A\in\ga_q\,.
$$
In addition,
$$
\cs_{\bp_\bz}(\ga_q)=\cs_{\bz}(\ga_q)=\{\om_q\}\,.
$$
The $C^*$-dynamical system $(\ga_q,\a)$ based on the action of $\bz$ of powers of the shift
is uniquely mixing with $\om_q$ as the unique invariant state.
\end{thm}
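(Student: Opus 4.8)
The plan is to establish the three assertions in the order: first the strong ergodic averaging statement, then deduce uniqueness of the invariant state, and finally the unique mixing property. For the first statement, the key structural fact about the $q$-deformed algebras with $q\in(-1,1)$ is that the Bogoliubov automorphisms implemented by $U_{g}$ act on the generators $a_{q}(e_{i})$ by permuting the basis vectors, and crucially, for $i\neq j$ the vacuum $\Om_{q}$ is separating enough that mixed moments factor through a combinatorial (Wick-type) formula governed by pair partitions weighted by powers of $q$. The heart of the matter is a norm estimate: I would show that for any $A$ in the $*$-algebra generated by finitely many $a_{q}(e_{i})$ (these are norm-dense in $\ga_{q}$, so a standard $\eps/3$-approximation reduces to this case), the averaged operator $\frac1n\sum_{k=1}^{n}\a_{g_{k}}(A)$ converges in norm to $\om_{q}(A)I$. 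The mechanism is that applying distinct automorphisms $\a_{g_{k}}$ pushes the ``support'' of $A$ into disjoint index regions, so that the products $\a_{g_{k}}(A)^{*}\a_{g_{l}}(A)$ for $k\neq l$ have vacuum-centered behaviour that decays; one computes $\|\frac1n\sum_k(\a_{g_k}(A)-\om_q(A)I)\|^2$ by a suitable $C^*$-norm or by estimating on the Fock space directly, using that the $q$-Fock space inner products of disjointly supported vectors are controlled by the operator norm bound $\|a_q(e_i)\|\le (1-|q|)^{-1/2}$ uniformly.

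For this I would lean on the $q$-Fock space realisation: write $\xi_A := \tilde A\,\Om_q$ where $\tilde A = A - \om_q(A)I$, and estimate
\begin{equation*}
\Big\|\frac1n\sum_{k=1}^{n}\a_{g_k}(\tilde A)\,\eta\Big\|^2
= \frac1{n^2}\sum_{k,l=1}^{n}\big\langle \a_{g_k}(\tilde A)\eta,\ \a_{g_l}(\tilde A)\eta\big\rangle
\end{equation*}
for $\eta$ ranging over a dense set, isolating the diagonal terms (which contribute $O(1/n)$) from the off-diagonal ones, where one uses that as $k,l\to\infty$ the index sets $g_k(\supp A)$ and $g_l(\supp A)$ become disjoint and the corresponding matrix elements vanish because $\om_q$ restricted to algebras with disjoint supports is a product state (this is the analogue, in the concrete picture, of the product state condition of Definition \ref{bspsc}, combined with $\om_q(\tilde A)=0$). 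This gives the norm convergence to $\om_q(A)I$ first for generators and then, by the approximation argument and the contractivity of the averaging maps, for all $A\in\ga_q$.

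The second assertion follows formally: if $\f\in\cs_{\bz}(\ga_q)$ (a fortiori if $\f\in\cs_{\bp_\bz}(\ga_q)$, using Proposition \ref{symsh}), then applying $\f$ to the ergodic average with $g_k=\a^k$ and invoking $\bz$-invariance of $\f$ gives $\f(A)=\lim_n\f\big(\frac1n\sum_{k=0}^{n-1}\a^k(A)\big)=\om_q(A)$, hence $\cs_{\bz}(\ga_q)=\{\om_q\}$, and the chain of inclusions with $\cs_{\bp_\bz}(\ga_q)$ squeezes both to $\{\om_q\}$. The third assertion, unique mixing, then amounts to showing $\lim_n f(\a^n(A))=\om_q(A)$ for all $A\in\ga_q$, $f\in\ga_q^*$; again reducing to generators $A$ and to vector functionals by density and by $\cf_{\pi_{\om_q}}$-type arguments, this is the statement that the shift mixes, which one reads off from the same $q$-combinatorial moment formula — once $\a^n(A)$ has support disjoint from a fixed region, the relevant matrix elements factor and converge to $\om_q(A)$ times the overlap.

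The main obstacle I anticipate is controlling the off-diagonal terms uniformly: one must verify that the vacuum functional $\om_q$ genuinely factorises across disjoint index sets on the \emph{norm-closed} algebras $\gar_q$, $\gg_q$ (not merely on polynomials), and that the decay of the cross terms $\langle\a_{g_k}(\tilde A)\eta,\a_{g_l}(\tilde A)\eta\rangle$ is fast enough — here the weighting by powers of $q$ with $|q|<1$ is essential and is precisely what fails at $q=\pm1$, so the estimate must make quantitative use of $|q|<1$ (e.g.\ via the bound $\|a_q(e_i)\|\le(1-|q|)^{-1/2}$ and the geometric decay of the $q$-deformed inner products on the full Fock space). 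I would handle this by citing the relevant structural results on $q$-Fock spaces and the factorisation of $\om_q$ from \cite{BS, CFL, CrFid}, reducing the novel content to the combinatorial bookkeeping of the averaging argument.
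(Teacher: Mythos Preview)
The paper itself does not prove this theorem; it defers entirely to \cite{CFL}. So the comparison is with what a correct argument must deliver, and there your proposal has a genuine gap in the first (and main) assertion.

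You propose to control
\[
\Big\|\frac1n\sum_{k=1}^{n}\a_{g_k}(\tilde A)\,\eta\Big\|^2
= \frac1{n^2}\sum_{k,l=1}^{n}\big\langle \a_{g_l}(\tilde A)^{*}\a_{g_k}(\tilde A)\eta,\ \eta\big\rangle
\]
for $\eta$ in a dense set, killing the off-diagonal terms via the product-state property of $\om_q$ over disjoint index sets. But the factorisation $\om_q(B^{*}C)=\om_q(B^{*})\om_q(C)=0$ for disjointly supported $B,C$ with $\om_q(B)=\om_q(C)=0$ says only that the \emph{vacuum} matrix element of $B^{*}C$ vanishes; it gives no control whatsoever on $\langle B^{*}C\eta,\eta\rangle$ for a general $\eta$, and certainly none on $\|B^{*}C\|$. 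Your scheme therefore yields at best strong-operator convergence of the averages, not the norm convergence the theorem asserts. This is not a cosmetic defect: for $q=\pm1$ the Cesaro averages do converge strongly to $\om_q(A)I$ on the relevant Fock spaces but not in norm, so any argument that does not exploit $|q|<1$ in a way that is \emph{uniform in the test vector} cannot possibly close.

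The mechanism used in \cite{CFL} is different in kind. One reduces to Wick-ordered monomials and works directly with the operator norm, using the linearity $f\mapsto a_q(f)$ together with the bound $\|a_q(f)\|\le(1-|q|)^{-1/2}\|f\|$ (this is precisely where $|q|<1$ enters quantitatively). For a single annihilator,
\[
\frac1n\sum_{k=1}^{n}\a_{g_k}\big(a_q(e_i)\big)=a_q\Big(\frac1n\sum_{k=1}^{n}e_{g_k(i)}\Big),
\]
and since the $g_k$ are distinct, all but boundedly many of the $e_{g_k(i)}$ are orthonormal, so $\big\|\frac1n\sum_k e_{g_k(i)}\big\|_{\ell^2(\bz)}=O(n^{-1/2})$ and the \emph{norm} of the averaged operator tends to zero. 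The substance of the proof is a multilinear extension of this estimate to arbitrary Wick words; the analysis lives at the level of $\ell^2(\bz)$ and the functorial bound for $a_q$, never at the level of individual vector states on the Fock space.

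Your deductions of the second and third assertions from the first are sound. In fact unique mixing already follows formally from the first assertion: if $f(\a^n(A))$ failed to converge to $f(I)\om_q(A)$, a compactness argument would produce a subsequence $n_j$ along which $f(\a^{n_j}(A))\to c'\neq f(I)\om_q(A)$; applying the first assertion to the distinct set $\{g_j:=n_j\}$ forces the Cesaro averages of $f(\a^{n_j}(A))$ to converge to $f(I)\om_q(A)$, contradicting $c'\neq f(I)\om_q(A)$.
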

\noindent
As the case $q=0$ corresponds to the reduced group algebra of the free group $\gg_0\sim C_{\rm red}^*(\bbf_\infty)$ (cf. \cite{VDN}),
with $\om_0$ corresponding to the canonical trace, we then achieve $\om_0$ as the unique state on $C_{\rm red}^*(\bbf_\infty)$ invariant for both the permutations moving only finitely many generators, and the shift. In addition, it is the unique invariant state on $C^*(\bbf_\infty)$ coming from the natural quotient
$$
C_{\rm red}^*(\bbf_\infty)=C^*(\bbf_\infty)/{\rm ker\l}
$$
$\l$ being the (left) regular representation of $\bbf_\infty$.

\subsection{Bose case}
\label{42}

As the creators satisfying \eqref{cre} cannot be bounded if $q=1$,
we manage the Boson case by using the Weyl algebra (formally by exponentiating the field operators, see e.g. \cite{BR1}).  In this situation, $\gar_1$ must be replaced by the Weyl algebra $W(C_{00}(\bz))$, where $C_{00}(\bz)$ is the pre-Hilbert space of all finitely supported complex sequences on $\bz$. The algebra generated by the selfadjoint parts of annihilators leads to abelian processes and is treated in the standard literature of Probability. It is well known that
$$
W(C_{00}(\bz))\sim\bigotimes_{_\bz}{}_{{}_{\rm min}}W(\bc)\,,
$$
the infinite tensor product of infinitely many copies of $W(\bc)$.
By Stormer's results \cite{St2}, one can obtain an ergodic decomposition of symmetric states as in de Finetti Theorem:
\begin{itemize}
\item[(i)] $\cs_{\bp_\bz}(W(C_{00}(\bz)))$ is a mixture
(i.e. direct integral) of states which are an infinite product state of a single one on $W(\bc)$, the latter providing the
ergodic ones $\ce(\cs_{\bp_\bz}(W(C_{00}(\bz))))$.
\end{itemize}
By using the results in \cite{AF1}, it is not hard to show that
\begin{itemize}
\item[(ii)] $\cs_{\bp_\bz}(W(C_{00}(\bz)))\subsetneq\cs_{\bz}(W(C_{00}(\bz)))$.
\end{itemize}
One can also prove the following version of de Finetti Theorem:
\begin{itemize}
\item[(iii)] a process on the Weyl algebra is exchangeable if and only if it is conditionally independent and identically distributed w.r.t. the tail algebra.
\end{itemize}
Finally, as in Theorem 5.3 of \cite{CrFid} one can establish a quantum analogue of the Hewitt and Savage Lemma:
\begin{itemize}
\item[(iv)] for states $\f\in\cs_{\bp_\bz}(W(C_{00}(\bz)))$, the {\it tail algebra} $\gz^\perp_\f$ coincides with the symmetric part of the centre $\gz^{\bp_\bz}_\f$
\end{itemize}

\subsection{Fermi case} For the unital $C^*$-algebra $\gar_{-1}$ generated by the annihilators (i.e. the CAR algebra), the same results listed above for the the Bose case hold true. The reader is referred to
\cite{CrF, CrFid}, and the examples relative to Fermi Markov states in Section 6 in \cite{F17} for the point (ii). As any symmetric state on the CAR algebra is automatically shift invariant, it is even (cf. \cite{BR1}). Then the analogue of (iv) above (the Hewitt and Savage Lemma for the Bose case) assumes the following form in the CAR case (cf. Theorem 5.3 of \cite{CrFid}):
\begin{itemize}
\item[(iv')] for states $\f\in\cs_{\bp_\bz}(\gar_{-1})$, the tail algebra $\gz^\perp_\f$ coincides with the even portion of the symmetric part of the centre.
\end{itemize}

\subsection{Boolean case}  Let $\ch$ be a complex Hilbert space. The Boolean Fock space over $\ch$ is given by $\G(\ch):=\mathbb{C}\oplus \ch$ and $(1,0)$ is the vacuum vector. On $\Gamma(\ch)$ we define the creation and annihilation operators, respectively given for $f,g\in \ch$ and $\a\in\bc$ by
$$
a^\dagger(f)(\alpha\oplus g):=0\oplus \alpha f,\,\,\,\, a(f)(\alpha\oplus g):=\langle g,f\rangle_\ch \oplus 0\,.
$$
For $\ch=\ell^2(\bz)$, it is seen that the concrete unital $C^*$-algebra $\gb$ (called the Boolean algebra) generated by the annihilators coincides with that generated by their selfadjoint parts, see e.g.
\cite{CrFid}. In addition,
$$
\gb=\ck(\ell^2(\{\#\}\cup\bz))+\bc I\,.
$$
Here, $a_i:=a(e_i)=\varepsilon_{\#,i}$ is the standard matrix unit. Here $\#$ corresponds to the subspace in $\G(\ell^2(\bz))$ generated by the vacuum $e_\#$,
and "$\ck$" stands for compact operators. If $\om_\#$ denotes the vacuum state and $\om_\infty$ the state at infinity:
$$
\om_\infty(A+cI):=c,\,\,\,\, A\in\ck(\ell^2(\{\#\}\cup\bz)),\,\,\,c\in\bc\,,
$$
we get the following structure for symmetric and stationary states (cf. \cite{CrFid, CFL}):
\begin{thm} For the shift-invariant and symmetric states, we get
$$
\cs_{\bp_\bz}(\gb)=\cs_{\bz}(\gb)=\{(1-\g)\om_\#+\g\om_\infty\}\,.
$$
\end{thm}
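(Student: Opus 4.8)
The plan is to exploit the very concrete description $\gb=\ck(\ell^2(\{\#\}\cup\bz))+\bc I$ together with the matrix‑unit structure $a_i=\varepsilon_{\#,i}$, which makes the Bogoliubov action of $G$ on $\gb$ essentially a permutation action on the index set $\{\#\}\cup\bz$ fixing $\#$. First I would reduce the computation of any $G$‑invariant state $\f$ to its values on a dense spanning set of $\gb$: by $\gb=\ck+\bc I$, it suffices to know $\f(I)=1$ and $\f(\varepsilon_{x,y})$ for $x,y\in\{\#\}\cup\bz$, since the matrix units $\{\varepsilon_{x,y}\}$ together with $I$ linearly span a dense subalgebra. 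Invariance under $\a_g$ forces $\f(\varepsilon_{x,y})=\f(\varepsilon_{g(x),g(y)})$ for all $g$ in either $\bp_\bz$ or the shift group on $\bz$ (with $g(\#)=\#$).

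The heart of the argument is a case analysis on the orbits of pairs $(x,y)$ under this action. Since the stabiliser of $\#$ acts on $\bz$ with only the trivial orbit structure one expects, the orbits are: $(\#,\#)$; the pairs $(\#,i)$ with $i\in\bz$ (one orbit); the pairs $(i,\#)$ (one orbit); the diagonal pairs $(i,i)$ (one orbit); and the off‑diagonal pairs $(i,j)$, $i\ne j$ in $\bz$ (one orbit). Positivity and normalisation then pin these down. The off‑diagonal value and the mixed values $\f(\varepsilon_{\#,i})$, $\f(\varepsilon_{i,\#})$ must vanish: this follows because, for instance, $\varepsilon_{\#,i}+\varepsilon_{\#,j}$ with $i\ne j$ lies in a family whose norms stay bounded while, were $\f(\varepsilon_{\#,i})=c\ne 0$ constant along the orbit, one could sum infinitely many "independent" contributions and violate $\|\f\|=1$; more cleanly, one applies $\f$ to $\big(\sum_{k=1}^n\varepsilon_{\#,i_k}\big)^*\big(\sum_{k=1}^n\varepsilon_{\#,i_k}\big)=\sum_k\varepsilon_{i_k,i_k}$ versus the off‑diagonal cross terms and lets $n\to\infty$. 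The same averaging/positivity trick kills the off‑diagonal $\f(\varepsilon_{i,j})$, $i\ne j$, and shows $\f(\varepsilon_{i,i})=:\gamma$ is a constant independent of $i$, with $0\le\gamma$; an analogous bound using $\sum_{k\le n}\varepsilon_{i_k,i_k}\le I$ forces $\gamma\le 1$. Then one identifies: the functional $A\mapsto\f(A)$ acts on $\ck$ as $\gamma$ times "evaluate the coefficient of $\varepsilon_{\#,\#}$ replaced suitably", i.e. one checks directly that $\f=(1-\gamma)\om_\#+\gamma\om_\infty$ reproduces exactly these orbit values, since $\om_\#(\varepsilon_{\#,\#})=1$, $\om_\#(\varepsilon_{i,i})=0=\om_\infty(\varepsilon_{i,i})$ is wrong — rather $\om_\infty$ vanishes on all of $\ck$ and $\om_\#$ gives $\langle\,\cdot\,e_\#,e_\#\rangle$, so $\om_\#(\varepsilon_{i,i})=0$ while we need $\gamma$; this is resolved by noting $\f(\varepsilon_{i,i})$ must in fact equal $\f(I)-\f(I-\varepsilon_{i,i})$ and using that $I-\sum_{k\le n}\varepsilon_{i_k,i_k}\ge 0$, so actually $\gamma=0$ is forced on the $\bz$‑diagonal and the surviving free parameter sits in the split $\f=(1-\gamma)\om_\#+\gamma\om_\infty$ with $\gamma\in[0,1]$ the weight on the state at infinity. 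I would present this identification as: every $G$‑invariant state is determined by the single number $\gamma:=\om_\infty$‑component, hence lies in the stated one‑parameter family; conversely each such convex combination is manifestly $G$‑invariant because both $\om_\#$ and $\om_\infty$ are (the vacuum is fixed and $\om_\infty$ factors through $\gb/\ck\cong\bc$).

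Finally I would note the equality $\cs_{\bp_\bz}(\gb)=\cs_{\bz}(\gb)$ comes for free once both are shown equal to this same family; alternatively it is a special case of Proposition~\ref{symsh} upgraded to states, together with the observation that the shift‑invariant analysis above used only permutations fixing $\#$ and moving finitely many indices at a time, so no extra shift‑invariant states appear. The main obstacle I anticipate is making the "infinitely many independent summands" positivity estimates fully rigorous on the non‑unital ideal $\ck$ while correctly tracking the unit $I$: one must be careful that $\f$ restricted to $\ck$ need not be a state, only a positive functional of norm $\le 1$, and extract the constraints $0\le\gamma\le 1$ and the vanishing of all off‑diagonal and $\#$‑mixed coefficients from finite‑rank truncations before passing to the limit. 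Everything else is bookkeeping with matrix units and the two distinguished states $\om_\#$, $\om_\infty$.
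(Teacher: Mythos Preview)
The paper does not actually prove this theorem in the text; it attributes the result to \cite{CrFid, CFL} and states it without proof. So there is no in-paper argument to compare against, and your direct matrix-unit approach via $\gb=\ck(\ell^2(\{\#\}\cup\bz))+\bc I$ is a perfectly reasonable attack. That said, the plan as written contains two concrete errors that must be repaired before it goes through.

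First, your orbit analysis is wrong for the shift. Under $\bz$ acting by $i\mapsto i+1$ (fixing $\#$), the off-diagonal pairs $(i,j)$ with $i\ne j$ in $\bz$ do \emph{not} form a single orbit: the orbit of $(i,j)$ is $\{(i+n,j+n):n\in\bz\}$, so orbits are parametrised by $i-j\in\bz\setminus\{0\}$. Your ``one orbit'' claim is correct for $\bp_\bz$ but false for the shift, and the closing remark that ``the shift-invariant analysis above used only permutations \dots\ moving finitely many indices'' is simply not true of the shift. The repair is easy: for each fixed $k\ne 0$ set $c_k:=\f(\varepsilon_{i,i+k})$ (shift-invariance makes this independent of $i$) and kill each $c_k$ separately by applying $\f$ to $B_n=\sum_{m=1}^n\varepsilon_{m,m+k}$, which satisfies $B_nB_n^*=\sum_{m=1}^n\varepsilon_{m,m}$ and hence $\|B_n\|=1$, forcing $|nc_k|\le 1$.

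Second, the displayed computation $\big(\sum_{k=1}^n\varepsilon_{\#,i_k}\big)^*\big(\sum_{k=1}^n\varepsilon_{\#,i_k}\big)=\sum_k\varepsilon_{i_k,i_k}$ is wrong: the product is the full block $\sum_{k,l}\varepsilon_{i_k,i_l}$. What you need is the other order: with $B=\sum_{k=1}^n\varepsilon_{\#,i_k}$ (distinct $i_k$) one has $BB^*=n\,\varepsilon_{\#,\#}$, hence $\|B\|=\sqrt{n}$ and $|\f(B)|=n|c|\le\sqrt{n}$, giving $c=0$. Your self-correcting paragraph about $\gamma$ is also a symptom of conflating two different parameters: the free number is $\f(\varepsilon_{\#,\#})=1-\g\in[0,1]$, while all $\f(\varepsilon_{i,i})$ for $i\in\bz$ are forced to $0$ by $0\le\sum_{k=1}^n\varepsilon_{i_k,i_k}\le I$. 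Once you separate these two roles cleanly, the identification $\f=(1-\g)\om_\#+\g\om_\infty$ on the dense span of matrix units plus $I$ is immediate, and the equality $\cs_{\bp_\bz}(\gb)=\cs_\bz(\gb)$ follows because both computations land on the same one-parameter family.
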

\noindent
The well established structure of $\gb$ allows to completely determine the fixed point algebras for the action of the shift and the permutations:
$$
\gb^{\bp_\bz}=\gb^{\bz}= \mathbb{C}P_\# \oplus \mathbb{C}P_\#^\bot\,,
$$
$P_\#$ being the orthogonal projection onto $\mathbb{C}e_\#$.

Consider the dynamical system $(\gb,\a)$ based on the Boolean algebra and the shift, together with the unique invariant conditional expectation
$E$ onto $\gb^{\bp_\bz}$ given by
\begin{equation}
\label{cescarz}
E(A+bI):=\langle Ae_\#,e_\#\rangle P_\#+bI\,,\quad A\in\ck(\ell^2(\{\#\}\cup\bz))\,,b\in\bc\,.
\end{equation}
\begin{prop} (\cite{CFL}, Proposition 7.2)
\label{bshi}
The $C^*$-dynamical system $(\gb, \a)$ is $E^\bz$-mixing with $E=E^\bz$ the unique invariant conditional expectation onto the fixed point subalgebra given in \eqref{cescarz}.
\end{prop}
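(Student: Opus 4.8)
The plan is to unwind the definition of $E^\bz$-mixing, namely $\lim_n f(\a^n(A))=f(E^\bz(A))$ for all $A\in\gb$ and $f\in\gb^*$, by exploiting the concrete description $\gb=\ck(\ell^2(\{\#\}\cup\bz))+\bc I$ together with the known fixed point algebra $\gb^\bz=\bc P_\#\oplus\bc P_\#^\bot$. First I would fix the spatial picture: the shift $\a$ is implemented by the unitary $V$ on $\ell^2(\{\#\}\cup\bz)$ with $Ve_\#=e_\#$ and $Ve_i=e_{i+1}$ for $i\in\bz$, so that on the matrix units $\varepsilon_{p,q}=|e_p\rangle\langle e_q|$, $p,q\in\{\#\}\cup\bz$, one has $\a^n(\varepsilon_{p,q})=\varepsilon_{p',q'}$, where an index lying in $\bz$ is translated by $n$ and the index $\#$, if it occurs, is left fixed. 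Since $\|\a^n\|=\|E^\bz\|=1$ and finite linear combinations of the $\varepsilon_{p,q}$ together with $I$ are norm dense in $\gb$, a routine approximation argument --- using that the functionals $f\circ\a^n$ are uniformly bounded by $\|f\|$ --- reduces the assertion to the mixing limit for $A=I$ (trivial, as $\a^n(I)=I=E^\bz(I)$) and for $A=\varepsilon_{p,q}$ with $f\in\gb^*$ arbitrary; for the latter only $f|_\ck$, represented by a trace class operator $T$ via $f(K)=\tr(TK)$, is relevant.

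Second, I would identify the map $E$ of \eqref{cescarz} with $E^\bz$. Rewriting it as $E(\,{\bf\cdot}\,)=\om_\#(\,{\bf\cdot}\,)P_\#+\om_\infty(\,{\bf\cdot}\,)P_\#^\bot$ exhibits $E$ as a unital positive projection of $\gb$ onto $\gb^\bz$, hence (by Tomiyama's theorem) a conditional expectation; and $E\circ\a=E$ because $V^*e_\#=e_\#$, so $E$ is $\a$-invariant. By uniqueness of the invariant conditional expectation onto $\gb^\bz$ one concludes $E=E^\bz$. That uniqueness is itself elementary: any invariant unital positive projection $F$ of $\gb$ onto the two-dimensional algebra $\bc P_\#\oplus\bc P_\#^\bot$ must have the form $F(\,{\bf\cdot}\,)=\phi_1(\,{\bf\cdot}\,)P_\#+\phi_2(\,{\bf\cdot}\,)P_\#^\bot$ with $\phi_1,\phi_2$ shift-invariant states satisfying $\phi_1(P_\#)=1$ and $\phi_2(P_\#)=0$, and the classification $\cs_\bz(\gb)=\{(1-\g)\om_\#+\g\om_\infty\}$ then forces $\phi_1=\om_\#$ and $\phi_2=\om_\infty$, i.e. $F=E$.

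Third comes the computation. If $p=q=\#$, then $\varepsilon_{\#,\#}=P_\#\in\gb^\bz$ is $\a$-fixed and $E^\bz(P_\#)=P_\#$, so the limit is immediate. In every other case, since $\varepsilon_{p,q}$ is compact, \eqref{cescarz} gives $E^\bz(\varepsilon_{p,q})=\langle\varepsilon_{p,q}e_\#,e_\#\rangle P_\#=0$, because $\varepsilon_{p,q}e_\#\neq0$ forces $q=\#$, and then $p\in\bz$ gives $\langle e_p,e_\#\rangle=0$; so it remains to show $f(\a^n(\varepsilon_{p,q}))\to0$. Writing $f(\a^n(\varepsilon_{p,q}))=\langle Te_{p'},e_{q'}\rangle$, there are two subcases: if $p\in\bz$, then $p'=p+n$ escapes to $+\infty$ and $|\langle Te_{p'},e_{q'}\rangle|\le\|Te_{p'}\|\to0$ since $T$ is compact and $e_{p'}\to0$ weakly; if $p=\#$, then necessarily $q\in\bz$, so $p'=\#$ stays fixed while $q'=q+n$ escapes, and $|\langle Te_\#,e_{q'}\rangle|\to0$ by Bessel's inequality applied to the fixed vector $Te_\#$. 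Hence $f(\a^n(\varepsilon_{p,q}))\to0=f(E^\bz(\varepsilon_{p,q}))$ in all cases, and the approximation reduction of the first step upgrades this to $\lim_n f(\a^n(A))=f(E^\bz(A))$ for every $A\in\gb$, which is precisely $E^\bz$-mixing.

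I do not foresee a genuine obstacle: once the spatial picture is set up, the remainder is bookkeeping. The two points deserving attention are the uniformity in the approximation step --- one must invoke that $f\circ\a^n$ and $f\circ E^\bz$ are all bounded by $\|f\|$ before passing from finite linear combinations of matrix units to an arbitrary $A\in\gb$ --- and, in the subcase where both indices of $\a^n(\varepsilon_{p,q})$ escape, the genuine use of compactness of $T$ (equivalently, that trace class membership forces the matrix entries of $T$ along an infinite family of distinct index pairs to converge to $0$), rather than mere boundedness.
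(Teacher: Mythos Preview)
The paper does not give its own proof of this proposition: it simply cites \cite{CFL}, Proposition 7.2. So there is nothing to compare your argument against within the present paper. That said, your proposal is correct and complete. The spatial implementation of $\a$ by the unitary $V$ fixing $e_\#$ and shifting the remaining basis vectors is the natural one; the reduction to matrix units via density and the uniform bound $\|f\circ\a^n\|,\|f\circ E^\bz\|\le\|f\|$ is legitimate; the rewriting $E(\,{\bf\cdot}\,)=\om_\#(\,{\bf\cdot}\,)P_\#+\om_\infty(\,{\bf\cdot}\,)P_\#^\bot$ agrees with \eqref{cescarz} and makes the conditional expectation structure and $\a$-invariance transparent; and your uniqueness argument, pinning down $\phi_1,\phi_2$ via the known description of $\cs_\bz(\gb)$, is clean. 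The endgame on matrix units is handled correctly: in the subcase $p\in\bz$ you genuinely use that $T$ is compact (so $Te_{p+n}\to0$ in norm), while in the subcase $p=\#$, $q\in\bz$ only square-summability of the Fourier coefficients of $Te_\#$ is needed. Both of the ``points deserving attention'' you flag are indeed the only places where care is required, and you have addressed them.
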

\noindent
Notice that the conditional expectation in  \eqref{cescarz} is also the unique invariant one for the natural action of $\bp_\bz$. Denoting again by $\a$ such an action, one can show that
$$
\lim_{J\uparrow\bz}\frac1{|J|!}\sum_{g\in\bp_J}\a_g(A)=E(A)\,,\quad A\in\gb\,,
$$
where $\{J\mid J\subset\bz\}$ is the direct net of all finite subsets of $\bz$.

Moreover, we report the following assertions proved in \cite{F}:
\begin{itemize}
\item[(i)] a Boolean process is exchangeable if and only if it is conditionally independent and identically distributed w.r.t. the tail algebra (as in the classical case);
\item[(ii)]  for $\om\in\cs_{\bp_\bz}(\gb)$, if $\gb_\om:=\pi_\om(\gb)''$ and $\gz^\perp_{\om}$, $\gb_\om^{\bp_\bz}$, $\gb_\om^{\bz}$ denote the tail algebra, the symmetric and stationary ones respectively,
we get
$$
\gz^\perp_{\om}=\gb_\om^{\bp_\bz}\subsetneq\gb_\om^{\bz}
$$
\end{itemize}
As a consequence, the equality above marks the transposition of the Hewitt-Savage Lemma to the Boolean situation, whereas the last inclusion entails the Olshen Theorem \cite {O} does not hold for Boolean stochastic processes.

\subsection{Monotone case}

We outline the structure of the stationary Monotone processes corresponding to states on the concrete unital Monotone $C^*$-algebra, and in addition on the subalgebra generated by the selfadjoint parts of annihilators.

As in \cite{Boz2}, for $k\geq 1$, denote $I_k:=\{(i_1,i_2,\ldots,i_k) \mid i_1< i_2 < \cdots <i_k, i_j\in \mathbb{Z}\}$, and for $k=0$, we take $I_0:=\{\emptyset\}$, $\emptyset$ being the empty sequence. The Hilbert space $\ch_k:=\ell^2(I_k)$ is precisely the $k$-particles space for the monotone quantisation. In particular, the $0$-particle space $\ch_0=\ell^2(\emptyset)$ is identified with the complex scalar field $\mathbb{C}$. The monotone Fock space is $\cf_m=\bigoplus_{k=0}^{\infty} \ch_k$.

For an increasing sequence $\a=(i_1,i_2,\ldots,i_k)$ of integers, we denote by $e_\a$ the generic element of canonical basis of $\cf_m$. There is a natural order structure on such sequences. Indeed, if
$\a=(i_1,i_2,\ldots,i_k)$, $\b=(j_1,j_2,\ldots,j_l)$, we say $\a < \b$ if $i_k < j_1$.
The monotone creation and annihilation operators are respectively given, for any $i\in \mathbb{Z}$, by
\begin{equation*}
a^\dagger_ie_{(i_1,i_2,\ldots,i_k)}:=\left\{
\begin{array}{ll}
e_{(i,i_1,i_2,\ldots,i_k)} & \text{if}\, i< i_1\,, \\
0 & \text{otherwise}\,, \\
\end{array}
\right.
\end{equation*}
\begin{equation*}
a_ie_{(i_1,i_2,\ldots,i_k)}:=\left\{
\begin{array}{ll}
e_{(i_2,\ldots,i_k)} & \text{if}\, k\geq 1\,\,\,\,\,\, \text{and}\,\,\,\,\,\, i=i_1\,,\\
0 & \text{otherwise} \\
\end{array}
\right.
\end{equation*}
where $a_i:=a(e_i)$. Moreover one can prove $\|a^\dagger_i\|=\|a_i\|=1$ and check that $a^\dagger_i$ and $a_i$ are mutually adjoint. The following conditions
\begin{equation*}
\begin{array}{ll}
  a^\dagger_ia^\dagger_j=a_ja_i=0 & \text{if}\,\, i\geq j\,, \\
  a_ia^\dagger_j=0 & \text{if}\,\, i\neq j
\end{array}
\end{equation*}
hold true and, in addition, the following commutation relation
\begin{equation}
\label{reba}
a_ia^\dagger_i=I-\sum_{k\leq i}a^\dagger_k a_k
\end{equation}
is satisfied, with the sum meant in the strong operator topology (cf. Proposition 3.2 in \cite{CFL}).
The $C^*$-algebra $\gam$ acting on $\cf_m$ is the unital
$C^*$-algebra generated by the annihilators $\{a_i\mid i\in\mathbb{Z}\}$. It was proven in \cite{CFL} that the selfadjoint part of
annihilators $\{a_i+a^+_i\mid i\in\mathbb{Z}\}$ also generate the same unital $C^*$-algebra as the annihilators (the circumstance is the same as the Booleans). Thus we can reduce the matter of our investigation to $\gam$.
Because of the order structure, the group $\bp_\bz$ of the permutations does not naturally act on $\gam$. So we are mainly focused on the action of the shift.

The results we enumerate below heavily rely on writing the algebraic part of the algebra (i.e. that algebraically generated by the annihilators) by reduced words in quasi-Wick order, see Section 5 of \cite{CFL}.
If $\gam_0$ is the concrete unital $*$-algebra generated by the monotone annihilators,
a word $X$ in $\gam_0$ is said to have a $\lambda$-\textbf{form} if there are $m,n\in\left\{  0,1,2,\ldots
\right\}$ and $i_1<i_2<\cdots < i_m, j_1>j_2> \cdots > j_n$ such
that
$$
X=a_{i_1}^{\dagger}\cdots a_{i_m}^{\dagger} a_{j_1}\cdots a_{j_n}\,,
$$
with $X=I$, the empty word corresponding to $m=n=0$.
Its length is $l(X)=m+n$.
In addition, $X$ is said to have a $\pi$-\textbf{form} if there are $m,n\in\left\{0,1,2,\ldots
\right\}$, $k\in\mathbb{Z}$, $i_1<i_2<\cdots < i_m, j_1>j_2> \cdots > j_n$ such that $i_m<k>j_1$ and
$$
X=a_{i_1}^{\dagger}\cdots a_{i_m}^{\dagger} a_{k}a_{k}^{\dagger} a_{j_1}\cdots a_{j_n}\,.
$$
Its length is $l(X)=m+2+n$.
As it is seen in \cite{CFL}, first the words in $\l$-form and in $\pi$-form are reduced, and in addition, each element in $\gam_0$ can be expressed as a finite linear combination of $\l$-forms and/or
$\pi$-forms. One could further imagine the set of words in $\l$ and $\pi$-form are linearly independent. Indeed this is not true.
As an example, the reduced $\pi$-form $a^{\dagger}_ia_ja^{\dagger}_ja_l$, $i<j>l$, can be written as sums of $\l$-forms:
$$
a^{\dagger}_ia_ja^{\dagger}_ja_l= a^{\dagger}_ia_l - \sum_{k=(i\vee l) +1}^j a^{\dagger}_ia^{\dagger}_ka_ka_l\,,
$$
where $i\vee l:= \max\{i,l\}$, as one can straightforwardly see by using \eqref{reba}.

The $\l$ and $\pi$-forms structure of the algebra yields a "splitting" representation of $\gam$, which turns out to describe the convex set of stationary states. More in detail, if
$\cam_0:=\spn\big\{X\in\gam_0\mid l(X)>0\big\}$ and $\cam:=\overline{\cam_0}$, where the closure is meant in the norm topology, one finds (cf. \cite{CFL}, Corollary 5.10)
$$
\gam=\cam+\bc I\,.
$$
Furthermore, if $\om$ denotes the vacuum expectation, and $\om_\infty\in \cs(\gam)$ is
$$
\om_\infty(X +cI):=c\,,\quad X\in \cam\,,c\in \mathbb{C}\,,
$$
the monotone stationary states are exactly those lying in the segment linking $\om$ and $\om_\infty$, respectively.
\begin{thm} (cf. \cite{CFL})
We have
$$
\cs_{\bz}(\gam)=\{(1-\g)\om+\g\om_\infty \mid \gamma\in [0,1]\}\,.
$$
\end{thm}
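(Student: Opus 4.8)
The plan is to use shift-invariance to collapse an arbitrary invariant state on $\gam$ to a single scalar parameter. Recall from Corollary 5.10 of \cite{CFL} that $\gam=\cam+\bc I$, with $\cam$ the closed linear span of the $\lambda$- and $\pi$-forms of positive length. Given $\f\in\cs_\bz(\gam)$, I would set $c:=\f(a_0a_0^\dagger)$; since $0\le a_0a_0^\dagger\le\|a_0\|^2I=I$ one immediately has $c\in[0,1]$. The goal is then to show $\f(X)=c\,\om(X)$ for every $\lambda$- or $\pi$-form $X$ of positive length. Granting this, $\f$ and $c\,\om$ agree on a spanning set of $\cam_0$, hence (both being bounded) on $\cam$; combined with $\f(I)=1$ and with the facts that $\om_\infty$ vanishes on $\cam$ and equals $1$ on $I$, the direct-sum decomposition $\gam=\cam+\bc I$ forces $\f=c\,\om+(1-c)\om_\infty=(1-\g)\om+\g\om_\infty$ with $\g:=1-c\in[0,1]$.

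The engine is a vanishing statement. The operators $\{a_k^\dagger a_k\}_{k\in\bz}$ are mutually orthogonal projections on $\cf_m$ (each projects onto the closed span of those $e_\a$ whose first entry is $k$), so $\sum_{k=-N}^{N}a_k^\dagger a_k\le I$ for every $N$; applying $\f$ and using $\f(a_k^\dagger a_k)=\f(a_0^\dagger a_0)$ by shift-invariance gives $(2N+1)\f(a_0^\dagger a_0)\le1$ for all $N$, whence $\f(a_0^\dagger a_0)=0$. Now if a word has the form $X=Ya_j$, the Cauchy--Schwarz inequality for $\f$ yields $|\f(X)|^2\le\f(YY^*)\f(a_j^\dagger a_j)=\f(YY^*)\f(a_0^\dagger a_0)=0$, so $\f(X)=0$; passing to adjoints, $\f$ also annihilates every word that begins with a creator $a_i^\dagger$.

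Next I would run through the reduced words of positive length. A $\lambda$-form $a_{i_1}^\dagger\cdots a_{i_m}^\dagger a_{j_1}\cdots a_{j_n}$ of positive length either has $n\ge1$ (so it ends with the annihilator $a_{j_n}$) or has $n=0,\ m\ge1$ (so it begins with the creator $a_{i_1}^\dagger$); in both cases $\f$ vanishes on it, and a one-line computation with the explicit action of $a_i,a_i^\dagger$ on $\cf_m$ shows $\om$ vanishes on it as well. A $\pi$-form $a_{i_1}^\dagger\cdots a_{i_m}^\dagger a_ka_k^\dagger a_{j_1}\cdots a_{j_n}$ with $m\ge1$ or $n\ge1$ likewise ends with an annihilator or begins with a creator, so again both $\f$ and $\om$ vanish on it; the sole remaining reduced word of positive length is $a_ka_k^\dagger$ itself, where $\f(a_ka_k^\dagger)=\f(a_0a_0^\dagger)=c$ by shift-invariance while $\om(a_ka_k^\dagger)=\langle a_ka_k^\dagger\Omega,\Omega\rangle=1$. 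Hence $\f(X)=c\,\om(X)$ on all reduced words of positive length, which by the reduction of the first paragraph gives the inclusion $\cs_\bz(\gam)\subseteq\{(1-\g)\om+\g\om_\infty\mid\g\in[0,1]\}$.

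For the converse, $\om$ is shift-invariant because the second quantisation of the shift on $\ell^2(\bz)$ permutes the basis vectors $e_\a$ of $\cf_m$ and fixes $\Omega=e_\emptyset$, while $\om_\infty$ is shift-invariant because $\a$ sends reduced words of positive length to reduced words of positive length, hence preserves $\cam_0$ and its closure $\cam$; therefore every $(1-\g)\om+\g\om_\infty$, $\g\in[0,1]$, belongs to $\cs_\bz(\gam)$, and since $\om\neq\om_\infty$ (they already differ on $a_0a_0^\dagger$) the segment is nondegenerate. I expect the only genuinely delicate point to be the bookkeeping with reduced words: one must invoke the $\lambda$/$\pi$-decomposition of $\gam_0$ from \cite{CFL} and verify that the adjoint of a $\lambda$-form (resp.\ $\pi$-form) is again a $\lambda$-form (resp.\ $\pi$-form), with the creator and annihilator blocks interchanged. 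The linear dependences among forms recorded after \eqref{reba} are harmless, since one is merely checking an identity already satisfied by both $\f$ and $\om$, not defining a functional form-by-form.
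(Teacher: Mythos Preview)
Your argument is correct and follows precisely the route the paper (and \cite{CFL}) sets up: the decomposition $\gam=\cam+\bc I$, the observation that the $a_k^\dagger a_k$ are pairwise orthogonal projections forcing $\f(a_0^\dagger a_0)=0$ under shift-invariance, and the Cauchy--Schwarz propagation to kill every $\lambda$- or $\pi$-form of positive length except $a_ka_k^\dagger$. The bookkeeping you flag (adjoints of $\lambda$/$\pi$-forms, the harmlessness of the linear relations after \eqref{reba}, and the well-definedness of $\om_\infty$ via $I\notin\cam$) is handled correctly, so nothing further is needed.
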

\noindent
Thus, similar to the Boolean case, the stationary states give rise to the simplest non trivial simplex, with ergodic points given by the vacuum and the state at infinity $\om_\infty$.
However, contrarily to what happens for Booleans, monotone stationary stochastic processes do not satisfy any strong ergodic property like unique ergodicity or unique (weak) mixing, see \cite{CFL}.

\subsection*{Acknowledgements}  The authors kindly acknowledge the support of
Italian INdAM-GNAMPA. They also thank an anonymous referee whose comments nicely improved the presentation of the paper.

\end{document}